\newtheorem{theorem}{Theorem}[section]
\theoremstyle{plain}
\newtheorem{lemma}{Lemma}[section]
\numberwithin{equation}{section}
\begin{document}
\title[Geometric Properties of function $N_\nu(z)$]{Geometric Properties of
function $az^{2}J_{\nu }^{\prime \prime }(z)+bzJ_{\nu }^{\prime
}(z)+cJ_{\nu }(z)$}
\author{Sercan Kaz\i mo\u glu} \author{Erhan Deniz} 
\address{Department of Mathematics, Faculty of Science and Letters, Kafkas
University, Kars, Turkey}
\email{srcnkzmglu@gmail.com (S. Kaz\i mo\u glu)}
\email{edeniz36@gmail.com (E. Deniz)} 
\keywords{Normalized Bessel functions of the fist kind, Convex functions,
Starlike functions, Zeros of Bessel function derivatives, Radius.\\
{\indent\textrm{2010 }}\ \textit{Mathematics Subject Classification:}
Primary 33C10, Secondary 30C45.}

\begin{abstract}
In this paper our aim is to find the radii of starlikeness and convexity for
three different kind of normalization of the $N_\nu(z)=az^{2}J_{\nu }^{\prime \prime }(z)+bzJ_{\nu }^{\prime
}(z)+cJ_{\nu }(z)$ function, where $J_\nu(z)$ is called the Bessel function of the first kind of order $\nu.$ The key tools in the proof of our
main results are the Mittag-Leffler expansion for $N_\nu(z)$ function and
properties of real zeros of it. In addition, by using the Euler-Rayleigh
inequalities we obtain some tight lower and upper bounds for the radii of
starlikeness and convexity of order zero for the normalized $N_\nu(z)$
function. Finally, we evaluate certain multiple sums of the zeros for $N_\nu(z)$ function.
\end{abstract}

\maketitle

\section{Introduction}

Denote by $\mathbb{D}_{r}=\left\{ {z\in \mathbb{C}:\;\left\vert z\right\vert
<r}\right\} \quad (r>0)$ the disk of radius $r$ and let $\mathbb{D}=\mathbb{D%
}_{1}.$ Let $\mathcal{A}$ be the class of analytic functions $f$ in the open
unit disk $\mathbb{D}$ which satisfy the usual normalization conditions $%
f(0)={f}^{\prime }(0)-1=0.$ Traditionally, the subclass of $\mathcal{A}$
consisting of univalent functions is denoted by $\mathcal{S}.$ We say that
the function $f$ $\in \mathcal{A}$ is starlike in the disk $\mathbb{D}_{r}$
if $f$ is univalent in $\mathbb{D}_{r}$, and $f(\mathbb{D}_{r})$ is a
starlike domain in $%
%TCIMACRO{\U{2102} }%
%BeginExpansion
\mathbb{C}
%EndExpansion
$ with respect to the origin. Analytically, the function $f$ is starlike in $%
\mathbb{D}_{r}$ if and only if $\operatorname{Re}\left( \frac{zf^{\prime }(z)}{f(z)}%
\right) >0,$ $z\in \mathbb{D}_{r}.$ For $\beta \in \lbrack 0,1)$ we say that
the function $f$ is starlike of order $\beta $ in $\mathbb{D}_{r}$ if and
only if $\operatorname{Re}\left( \frac{zf^{\prime }(z)}{f(z)}\right) >\beta ,$ $z\in 
\mathbb{D}_{r}.$ We define by the real number
\begin{equation*}
r_{\beta }^{\ast }(f)=\sup \left\{ r\in \left( 0,r_{f}\right) :\operatorname{Re}%
\left( \frac{zf^{\prime }(z)}{f(z)}\right) >\beta \;\text{for all }z\in 
\mathbb{D}_{r}\right\}
\end{equation*}%
the radius of starlikeness of order $\beta $ of the function $f$. Note that $%
r^{\ast }(f)=r_{0}^{\ast }(f)$ is the largest radius such that the image
region $f(\mathbb{D}_{r_{\beta }^{\ast }(f)})$ is a starlike domain with
respect to the origin.

The function $f$ $\in \mathcal{A}$ is convex in the disk $\mathbb{D}_{r}$ if 
$f$ is univalent in $\mathbb{D}_{r}$, and $f(\mathbb{D}_{r})$ is a convex
domain in $%
%TCIMACRO{\U{2102} }%
%BeginExpansion
\mathbb{C}
%EndExpansion
$. Analytically, the function $f$ is convex in $\mathbb{D}_{r}$ if and only
if $\operatorname{Re}\left( 1+\frac{zf^{\prime \prime }(z)}{f^{\prime }(z)}\right)
>0,$ $z\in \mathbb{D}_{r}.$ For $\beta \in \lbrack 0,1)$ we say that the
function $f$ is convex of order $\beta $ in $\mathbb{D}_{r}$ if and only if $%
\operatorname{Re}\left( 1+\frac{zf^{\prime \prime }(z)}{f^{\prime }(z)}\right)
>\beta ,$ $z\in \mathbb{D}_{r}.$ The radius of convexity of order $\beta $
of the function $f$ is defined by the real number
\begin{equation*}
r_{\beta }^{c}(f)=\sup \left\{ r\in \left( 0,r_{f}\right) :\operatorname{Re}\left( 1+%
\frac{zf^{\prime \prime }(z)}{f^{\prime }(z)}\right) >\beta \;\text{for all }%
z\in \mathbb{D}_{r}\right\} .
\end{equation*}%
Note that $r^{c}(f)=r_{0}^{c}(f)$ is the largest radius such that the image
region $f(\mathbb{D}_{r_{\beta }^{c}(f)})$ is a convex domain.

The Bessel function of the first kind of order $\nu $ is defined by \cite[p.
217]{Olver} 
\begin{equation}
J_{\nu }(z)=\sum_{n=0}^{\infty }\frac{\left( -1\right) ^{n}}{n!\Gamma (n+\nu
+1)}\left( \frac{z}{2}\right) ^{2n+\nu }\text{ \ \ }\left( z\in 
%TCIMACRO{\U{2102} }%
%BeginExpansion
\mathbb{C}
%EndExpansion
\right) .  \label{J1}
\end{equation}%
We know that it has all its zeros real for $\nu >-1$. Here now we consider
mainly the general function%
\begin{equation*}
N_{\nu }(z)=az^{2}J_{\nu }^{\prime \prime }(z)+bzJ_{\nu }^{\prime
}(z)+cJ_{\nu }(z)
\end{equation*}%
studied by Mercer \cite{Mercer}. Here, as in \cite{Mercer}, $q=b-a$ and 
\begin{equation*}
\left(c=0 \text{ and } q\neq0 \right) \text{ or } \left(c>0 \text{ and } q>0
\right) .
\end{equation*}
From (\ref{J1}), we have the power series representation%
\begin{equation}
N_{\nu }(z)=\sum_{n=0}^{\infty }\frac{Q(2n+\nu )\left( -1\right) ^{n}}{%
n!\Gamma (n+\nu +1)}\left( \frac{z}{2}\right) ^{2n+\nu }\text{ \ \ }\left(
z\in 
%TCIMACRO{\U{2102} }%
%BeginExpansion
\mathbb{C}
%EndExpansion
\right)  \label{J2}
\end{equation}%
where $Q(\nu)=a\nu(\nu-1)+b\nu+c$ $\left( a,b,c\in 
%TCIMACRO{\U{211d} }%
%BeginExpansion
\mathbb{R}
%EndExpansion
\right) .$ There are three important works on the function $N_{\nu }.$ First
Mercer's paper \cite{Mercer} which it has been proved that the $k$\textit{th}
positive zero of $N_{\nu }$ increases with $\nu $ in $\nu >0$. Second,
Ismail and Muldoon \cite{IS} showed that under the conditions $a,b,c\in 
%TCIMACRO{\U{211d} }%
%BeginExpansion
\mathbb{R}
%EndExpansion
$ such that $c=0$ and $b\neq a$ or $c>0$ and $b>a$;
\begin{itemize}
\item[(i)] For $\nu > 0$, the zeros of $N_{\nu }(z)$ are either real or
purely imaginary.

\item[(ii)] For $\nu \geq \max \{0,\nu _{0}\}$, where $\nu _{0}$ is the
largest real root of the quadratic $Q(\nu )=a\nu (\nu -1)+b\nu +c,$ the the
zeros of $N_{\nu }(z)$ are real.

\item[(iii)] If $\nu >0$, $\left( a\nu ^{2}+(b-a)\nu +c\right) \diagup
(b-a)>0$ and $a\diagup (b-a)<0$, the zeros of $N_{\nu }(z)$ are all real
except for a single pair which are conjugate purely imaginary.
\end{itemize}
Lastly, Baricz, \c{C}a\u{g}lar and Deniz \cite{Erhandeniz} obtained sufficient and necessary
conditions for the starlikeness of a normalized form of $N_{\nu }$ by using
results of Mercer \cite{Mercer}, Ismail and Muldoon \cite{IS} and Shah and Trimble \cite{Shah}. In this
paper, we deal with the radii of starlikeness and convexity of order $\beta $
for the functions $f_{\nu }(z),\;g_{\nu }(z)$ and $h_{\nu }(z)$ defined by (%
\ref{F1}) in the case when $\nu \geq \max \{0,\nu _{0}\}$. Also we
determined tight lower and upper bounds for the radii of starlikeness and
convexity of these functions. The key tools in their proofs were some new
Mittag-Leffler expansions for quotients of the function $N_{\nu }$, special
properties of the zeros of the function $N_{\nu }$ and their derivatives,
Euler-Rayleigh inequalities and the fact that the smallest positive zeros of
some Dini functions are less than the first positive zero of the function $
N_{\nu }$. Recently, for studies on the Geometric properties of Bessel functions, see \cite{Baricz,Ba1,Ba3,Erhandeniz,Ba21,Br,Ca,De,Deniztrev,Kr,Sza}

Note that $N_{\nu }$ is not belongs to $\mathcal{A}$. To prove the main
results we need normalizations of the function $N_{\nu }$. In this paper we
focus on the following normalized forms%
\begin{eqnarray}
f_{\nu }(z) &=&\left[ \frac{2^{\nu }\Gamma (\nu +1)}{Q(\nu)}N_{\nu }(z)%
\right] ^{\frac{1}{\nu }},  \label{F1} \\
g_{\nu }(z) &=&\frac{2^{\nu }\Gamma (\nu +1)z^{1-\nu }}{Q(\nu)}N_{\nu }(z), 
\notag \\
h_{\nu }(z) &=&\frac{2^{\nu }\Gamma (\nu +1)z^{1-\frac{\nu }{2}}}{Q(\nu)}%
N_{\nu }(\sqrt{z}).  \notag
\end{eqnarray}

In the rest of this paper, the quadratic $Q(\nu )=a\nu (\nu -1)+b\nu +c$ will always  provide on $a,b,c\in \mathbb{R} $ $\left(c=0 \text{ \  and \ } a\neq b    \right)$ or $\left(c> 0 \text{ \  and \ } a< b    \right).$ Moreover, $\nu_0$ is the largest real root of the quadratic $Q(\nu )$ defined according to the above conditions.

\subsection{Zeros of hyperbolic polynomials and the Laguerre--P\'{o}lya
class of entire functions}

In this subsection, we recall some necessary information about polynomials
and entire functions with real zeros. An algebraic polynomial is called
hyperbolic if all its zeros are real. We formulate the following specific
statement that we shall need, see \cite{Deniztrev} for more details.

By definition, a real entire function $\psi $ belongs to the Laguerre--P\'{o}%
lya class $\mathcal{LP}$ if it can be represented in the form%
\begin{equation*}
\psi (x)=cx^{m}e^{-ax^{2}+\beta x}\prod\limits_{k\geq 1}\left( 1+\frac{x}{%
x_{k}}\right) e^{-\frac{x}{x_{k}}},
\end{equation*}%
with $c,\beta ,x_{k}\in 
%TCIMACRO{\U{211d} }%
%BeginExpansion
\mathbb{R}
%EndExpansion
,$ $a\geq 0,$ $m\in 
%TCIMACRO{\U{2115} }%
%BeginExpansion
\mathbb{N}
%EndExpansion
\cup \{0\}$ and $\sum x_{k}^{-2}<\infty .$ Similarly, $\phi $ is said to be
of type $\mathcal{I}$ in the Laguerre-P\'{o}lya class, written $\varphi \in 
\mathcal{LPI}$, if $\phi (x)$ or $\phi (-x)$ can be represented as%
\begin{equation*}
\phi (x)=cx^{m}e^{\sigma x}\prod\limits_{k\geq 1}\left( 1+\frac{x}{x_{k}}%
\right) ,
\end{equation*}%
with $c\in 
%TCIMACRO{\U{211d} }%
%BeginExpansion
\mathbb{R}
%EndExpansion
,$ $\sigma \geq 0,$ $m\in 
%TCIMACRO{\U{2115} }%
%BeginExpansion
\mathbb{N}
%EndExpansion
\cup \{0\},$ $x_{k}>0$ and $\sum x_{k}^{-1}<\infty .$ The class $\mathcal{LP}
$ is the complement of the space of hyperbolic polynomials in the topology
induced by the uniform convergence on the compact sets of the complex plane
while $\mathcal{LPI}$ is the complement of the hyperbolic polynomials whose
zeros possess a preassigned constant sign. Given an entire function $\varphi 
$ with the Maclaurin expansion%
\begin{equation*}
\varphi (x)=\sum_{k\geq 0}\mu _{k}\frac{x^{k}}{k!},
\end{equation*}%
its Jensen polynomials are defined by 
\begin{equation*}
P_{m}(\varphi ;x)=P_{m}(x)=\sum_{k=0}^{m}\left( 
\begin{array}{c}
m \\ 
k%
\end{array}%
\right) \mu _{k}x^{k}.
\end{equation*}%
The next result of Jensen \cite{Je} is a well-known characterization of
functions belonging to $\mathcal{LP}$.

\begin{lemma}
\label{Le2} The function $\varphi $ belongs to $\mathcal{LP}$ $(\mathcal{LPI}
$, respectively$)$ if and only if all the polynomials $P_{m}(\varphi ;x)$, $%
m=1,2,...,$ are hyperbolic (hyperbolic with zeros of equal sign). Moreover,
the sequence $P_{m}(\varphi ;z\diagup n)$ converges locally uniformly to $%
\varphi (z)$.
\end{lemma}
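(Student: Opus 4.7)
The plan is to prove each direction separately, starting with the convergence assertion. For the convergence $P_m(\varphi; z/m) \to \varphi(z)$, I would expand
\[
P_m(\varphi; z/m) = \sum_{k=0}^{m} \binom{m}{k}\mu_k \frac{z^k}{m^k} = \sum_{k=0}^{m} \frac{m(m-1)\cdots(m-k+1)}{m^k}\cdot\frac{\mu_k z^k}{k!}.
\]
The factor $m(m-1)\cdots(m-k+1)/m^k$ lies in $[0,1]$ and tends to $1$ for each fixed $k$, so termwise it matches the Maclaurin coefficients of $\varphi$; dominated convergence against the majorant $\sum |\mu_k| R^k/k!$ on $|z|\leq R$ yields local uniform convergence to $\varphi$.

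For the sufficiency direction, assume every Jensen polynomial $P_m(\varphi;x)$ is hyperbolic. Then each $P_m(\varphi; z/m)$ is also hyperbolic, since rescaling the variable by a positive constant preserves reality (and signs) of the zeros. Because $\mathcal{LP}$ is by definition the completion of the hyperbolic polynomials in the topology of local uniform convergence on compact subsets of $\mathbb{C}$, the limit $\varphi$ lies in $\mathcal{LP}$. For $\mathcal{LPI}$ the same argument works, because the zeros of $P_m(\varphi; z/m)$ are $1/m$ times those of $P_m(\varphi; x)$ and hence of the same sign.

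The harder necessity direction is the main obstacle. My approach is to reduce to the polynomial case by density: choose hyperbolic polynomials $p_n \to \varphi$ locally uniformly; since $p_n^{(k)}(0) \to \mu_k$, the Jensen polynomials $P_m(p_n; x)$ converge coefficientwise, hence locally uniformly, to $P_m(\varphi; x)$. If each $P_m(p_n; x)$ is hyperbolic, Hurwitz's theorem then guarantees $P_m(\varphi; x)$ is hyperbolic. The core polynomial claim follows from the P\'olya--Schur multiplier-sequence theorem applied to the sequence $\lambda_k^{(m)} = m!/(m-k)!$ for $0 \leq k \leq m$ and $\lambda_k^{(m)} = 0$ otherwise: its generating function $\sum_{k\geq 0} \lambda_k^{(m)} x^k/k! = (1+x)^m$ lies in $\mathcal{LPI}\subset\mathcal{LP}$, so $\{\lambda_k^{(m)}\}$ is a multiplier sequence of both kinds, mapping hyperbolic polynomials to hyperbolic polynomials and preserving the sign of zeros; this handles the $\mathcal{LPI}$ statement as well. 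The main difficulty is the invocation of P\'olya--Schur, itself a substantial classical result; a self-contained alternative would use the Hermite--Poulain theorem together with an induction on the degree of $p_n$, exploiting that $(1+xD)^m$ can be factored into real linear differential operators that each preserve hyperbolicity.
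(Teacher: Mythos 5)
The paper does not actually prove this lemma: it is quoted as a classical theorem of Jensen with the citation \cite{Je} and no argument, so there is no internal proof to compare yours against, and your attempt must be judged on its own. On its own terms your outline is essentially correct and is the standard modern route. The convergence claim and the sufficiency direction are fine, with the caveat that sufficiency leans entirely on the paper's description of $\mathcal{LP}$ as the closure (the text says ``complement'', clearly a slip) of the hyperbolic polynomials under locally uniform convergence; if one instead took the Weierstrass product as the definition, this direction would silently absorb the nontrivial half of the Laguerre--P\'olya characterization. Your reduction of the necessity direction to the polynomial case via coefficientwise convergence of degree-bounded polynomials and Hurwitz is clean. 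The one point needing care is the appeal to the full P\'olya--Schur multiplier theorem for the core fact that $p$ hyperbolic implies $P_m(p;x)$ hyperbolic: the standard proof of P\'olya--Schur itself passes through Jensen polynomials, i.e.\ through the very lemma you are proving, so quoting it wholesale risks circularity. The honest fix is to invoke only the special case you need, namely the Schur--Malo composition theorem (if $\sum a_kx^k$ is hyperbolic and $\sum b_kx^k$ is hyperbolic with zeros of one sign, then $\sum k!\,a_kb_kx^k$ is hyperbolic), applied with $\sum b_kx^k=(1+x)^m$; this has an elementary Rolle-type proof independent of Jensen's lemma, and your suggested Hermite--Poulain alternative is an equally legitimate way to close the same gap. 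Your handling of the $\mathcal{LPI}$ case via positivity of the surviving coefficients is correct.
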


The following result is a key tool in the proof of main results.

\begin{lemma}
\label{Le4} If $\nu \geq \max\{0,\nu_0\}$ then the functions $z\longmapsto
\Psi_\nu(z)=\frac{2^{\nu }\Gamma (\nu +1)}{Q(\nu )z^{\nu }}N_\nu(z)$ has
infinitely many zeros and all of them are positive. Denoting by $%
\lambda_{\nu,n}$ the $n$th positive zero of $\Psi_\nu(z),$ under the same
conditions the Weierstrassian decomposition 
\begin{equation*}
\Psi_\nu(z)=\prod\limits_{n\geq 1}\left( 1-\frac{z^{2}}{ \lambda_{\nu ,n}
^{2}}\right)
\end{equation*}
is valid, and this product is uniformly convergent on compact subsets of the
complex plane. Moreover, if we denote by $\lambda_{\nu ,n}^\prime$ the $n$th
positive zero of $\Phi_\nu^\prime(z),$ where $\Phi_\nu(z)=z^\nu \Psi_\nu(z), 
$ then the positive zeros of $\Psi_\nu(z)$ are interlaced with those of $%
\Phi_\nu^\prime(z).$ In the other words, the zeros satisfy the chain of
inequalities 
\begin{equation*}
\lambda_{\nu ,1}^\prime<\lambda_{\nu ,1}<\lambda_{\nu
,2}^{\prime}<\lambda_{\nu ,2}<\lambda_{\nu ,3}^\prime<\lambda_{\nu
,3}<\cdots .
\end{equation*}
\end{lemma}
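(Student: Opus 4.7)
The plan is to prove the three assertions in turn: reality and counting of zeros, the Weierstrass product, and the interlacing.

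First I would record that after dividing the series (\ref{J2}) by $z^{\nu}$ and multiplying by the normalizing constant, $\Psi_{\nu}$ is the even entire function
\begin{equation*}
\Psi_{\nu}(z)=\sum_{n\geq 0}\frac{(-1)^{n}\,Q(2n+\nu)\,\Gamma(\nu+1)}{n!\,\Gamma(n+\nu+1)\,4^{n}\,Q(\nu)}\,z^{2n},
\end{equation*}
with $\Psi_{\nu}(0)=1$. For $\nu\geq\max\{0,\nu_{0}\}$, part (ii) of the Ismail--Muldoon result quoted above says that all zeros of $N_{\nu}$ are real; the $z^{\nu}$ factor accounts for the zero of $N_{\nu}$ at the origin, so the zeros of $\Psi_{\nu}$ are real and nonzero. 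Evenness then forces them to come in pairs $\pm\lambda_{\nu,n}$, and the oscillatory large-$z$ behavior of $N_{\nu}$ on the real axis, inherited from $J_{\nu},J_{\nu}^{\prime},J_{\nu}^{\prime\prime}$, yields infinitely many such pairs.

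For the Hadamard product I would pass to the auxiliary function $\chi_{\nu}(w)=\Psi_{\nu}(\sqrt{w})$, whose Maclaurin coefficients in $w$ are just the even coefficients of $\Psi_{\nu}$. Since $J_{\nu}$ is of order $1$, so are $N_{\nu}$ and $\Psi_{\nu}$, and consequently $\chi_{\nu}$ is of order at most $1/2$. An entire function of order strictly less than $1$ has genus $0$ and its zero set satisfies $\sum\lambda_{\nu,n}^{-2}<\infty$, so Hadamard factorization together with $\chi_{\nu}(0)=1$ gives
\begin{equation*}
\chi_{\nu}(w)=\prod_{n\geq 1}\left(1-\frac{w}{\lambda_{\nu,n}^{2}}\right),
\end{equation*}
and substituting $w=z^{2}$ yields the asserted Weierstrass product for $\Psi_{\nu}$, converging uniformly on compact subsets of $\mathbb{C}$. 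An equivalent route is to verify via Lemma~\ref{Le2} that each Jensen polynomial $P_{m}(\chi_{\nu};x)$ is hyperbolic with zeros of constant sign, so that $\chi_{\nu}\in\mathcal{LPI}$, and then read off the product.

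For the interlacing I would logarithmically differentiate $\Phi_{\nu}(z)=z^{\nu}\Psi_{\nu}(z)$ using the product expansion to obtain
\begin{equation*}
\frac{\Phi_{\nu}^{\prime}(z)}{\Phi_{\nu}(z)}=\frac{\nu}{z}+\sum_{n\geq 1}\frac{2z}{z^{2}-\lambda_{\nu,n}^{2}},
\end{equation*}
and then restrict to $x>0$. Rewriting $\Phi_{\nu}^{\prime}(x)=0$ as $\nu/x=\sum_{n\geq 1}2x/(\lambda_{\nu,n}^{2}-x^{2})$, the left side is continuous and strictly decreasing on $(0,\infty)$, whereas the right side is continuous and strictly increasing on $(0,\lambda_{\nu,1})$, running from $0$ to $+\infty$, and on each $(\lambda_{\nu,k},\lambda_{\nu,k+1})$, running from $-\infty$ to $+\infty$. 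Each interval therefore contains exactly one root $\lambda_{\nu,k}^{\prime}$ of $\Phi_{\nu}^{\prime}$, which establishes the strict chain $0<\lambda_{\nu,1}^{\prime}<\lambda_{\nu,1}<\lambda_{\nu,2}^{\prime}<\lambda_{\nu,2}<\cdots$; a Rolle/count-of-zeros argument shows there are no other positive zeros of $\Phi_{\nu}^{\prime}$.

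The hardest step is the Weierstrass factorization: the order of $\Psi_{\nu}$ is at most $1$ by standard estimates, but to apply Hadamard in its genus-$0$ form for $\chi_{\nu}$ I must rule out a linear exponential factor. Evenness of $\Psi_{\nu}$ handles half of this automatically, and the cleanest completion is to invoke the Jensen-polynomial criterion of Lemma~\ref{Le2}, which also re-proves the reality and positivity of the zeros of $\chi_{\nu}$ and so streamlines the whole argument.
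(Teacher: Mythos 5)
Your proposal is correct, and on the one assertion the paper actually argues in detail---the interlacing---it follows essentially the same route: both you and the authors write down the Mittag--Leffler expansion of $\Phi_\nu'/\Phi_\nu$ and run a monotonicity argument to show $\Phi_\nu'$ vanishes exactly once strictly between consecutive zeros of $\Phi_\nu$ (you compare a decreasing left side $\nu/x$ with an increasing right side; the paper differentiates the quotient and shows it decreases from $+\infty$ to $-\infty$ on each $(\lambda_{\nu,n},\lambda_{\nu,n+1})$---the same computation in different clothing). The difference is in the first two assertions: the paper simply cites Ismail--Muldoon for the existence of infinitely many real zeros and Baricz--\c{C}a\u{g}lar--Deniz for the Weierstrass product, whereas you prove them by passing to $\chi_\nu(w)=\Psi_\nu(\sqrt{w})$, checking that its order is at most $1/2$, and invoking genus-zero Hadamard factorization together with $\chi_\nu(0)=1$ to exclude any exponential factor. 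That argument is sound and is in fact the very same order-$1/2$ computation the paper carries out later for $N_\nu'$, $g_\nu'$ and $h_\nu'$ in the proof of Theorem \ref{t3}, so your version is the more self-contained of the two; the alternative via the Jensen polynomials of Lemma \ref{Le2} that you mention is also viable. One shared loose end, not a defect of your write-up relative to the paper's: when $\nu=0$ the term $\nu/x$ vanishes identically, $\Phi_0'=\Psi_0'$ is negative on $(0,\lambda_{\nu,1})$, and neither argument produces a critical point below $\lambda_{\nu,1}$, so the stated chain $\lambda_{\nu,1}'<\lambda_{\nu,1}<\cdots$ really requires $\nu>0$ for its first link.
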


\begin{proof}
Recall that \cite{IS} proved that if $\nu \geq \max\{0,\nu_0\}$ then $%
N_\nu(z)$ entire function infinitely many positive zeros. Since the function 
$N_\nu(z)$ is entire, its infinite product was presented by erhan
\cite{Erhandeniz}.Using the infinite product representation we get that 
\begin{equation}  \label{lemd1}
\frac{\Phi_\nu^\prime(z)}{\Phi_\nu(z)}=\frac{\nu}{z}+\frac{\Psi_\nu^\prime(z)%
}{\Psi_\nu(z)}=\frac{\nu}{z}+\sum_{n\geq 1}\frac{2z}{z^2-\lambda_{\nu,n }^2}
\end{equation}
Differentiating both sides of (\ref{lemd1}), we have 
\begin{equation*}
\frac{d}{dz}\left( \frac{\Phi_\nu^\prime(z)}{\Phi_\nu(z)} \right)=- \frac{\nu%
}{z^2}-2\sum_{n\geq 1}\frac{z^2+\lambda_{\nu ,n}^2}{\left(z^2-\lambda_{\nu,n
}^2\right)^2},~~~ z\neq \lambda_{\nu ,n}. 
\end{equation*}
The right hand side of the above expression is real and negative for each $z$
real, $\nu \geq \max\{0,\nu_0\}.$ Thus, the quotient on the left side of (%
\ref{lemd1}) is a strictly decreasing function from $+\infty$ to $-\infty$
as $z$ increases through real values over the open interval $%
\left(\lambda_{\nu ,n},\lambda_{\nu ,n+1} \right),~~ n\in \mathbb{N}.$ Hence
the function $z\longmapsto \Phi_\nu^\prime(z)$ vanishes just once between
two consecutive zeros of the function $z\longmapsto \Phi_\nu(z).$
\end{proof}

\subsection{Euler-Rayleigh Sums for Positive Zeros of $N_{\protect\nu }(z)$}

Baricz, \c{C}a\u{g}lar and Deniz \cite{Erhandeniz} proved Mittag-Leffler expansion of $N_{\nu }(z)$ as
follows 
\begin{equation}
N_{\nu }(z)=\alpha z^2J^{\prime\prime}_\nu(z)+bzJ^\prime_\nu(z)+cJ_\nu(z)= 
\frac{Q(\nu )z^{\nu }}{2^{\nu }\Gamma (\nu +1)}\prod\limits_{n\geq 1}\left(
1-\frac{z^{2}}{ \lambda_{\nu ,n} ^{2}}\right)  \label{r11}
\end{equation}%
where $Q(\nu)=a\nu\left(\nu-1 \right)+b\nu+c,~~~ \left(a,b,c\in \mathbb{R}%
\right)$ and $\lambda_{\nu ,n}$ is the $n$th positive zero of $N_{\nu }(z)$  
$(n\in 
%TCIMACRO{\U{2115} }%
%BeginExpansion
\mathbb{N}
%EndExpansion
)$. Therefore we can write 
\begin{eqnarray}
g_{\nu }(z) &=&\frac{2^{\nu }\Gamma (\nu +1)z^{1-\nu }}{Q(\nu)}N_{\nu }(z)
\label{S1} \\
&=&z\prod\limits_{n\geq 1}\left( 1-\frac{z^{2}}{ \lambda_{\nu ,n}^{2}}%
\right) .  \notag
\end{eqnarray}%
On the other hand, the series representation of $g_{\nu }(z)$%
\begin{equation}
g_{\nu }(z)=\sum_{n=0}^{\infty }\frac{\left( -1\right) ^{n}\Gamma (\nu
+1)Q(2n+\nu)}{n!4^{n}\Gamma (n+\nu +1)Q(\nu)}z^{2n+1}.  \label{S2}
\end{equation}

Now, we would like to mention that by using the equations (\ref{S1}) and (%
\ref{S2}) we can obtain the following Euler-Rayleigh sums for the positive
zeros of the function $g_{\nu}$. From the equality (\ref{S2}) we have%
\begin{equation}
g_{\nu }(z)=z-\frac{Q(\nu +2)}{4(\nu+1)Q(\nu) }z^{3}+\frac{Q(\nu +4)}{%
32(\nu+1)(\nu +2)Q(\nu)}z^{5}-\cdots .  \label{S3}
\end{equation}%
Now, if we consider (\ref{S1}), then some calculations yield that%
\begin{equation}
g_{\nu }(z)=z-\sum\limits_{n\geq 1}\frac{1}{ \lambda_{\nu ,n} ^{2}}z^{3}+%
\frac{1}{2}\left( \left( \sum\limits_{n\geq 1}\frac{1}{\lambda_{\nu ,n}^{2}}%
\right) ^{2}-\sum\limits_{n\geq 1}\frac{1}{ \lambda_{\nu ,n}^{4}}\right)
z^{5}-\cdots .  \label{S4}
\end{equation}%
By equating the first few coefficients with the same degrees in equations (%
\ref{S3}) and (\ref{S4}) we get,%
\begin{equation}
\sum\limits_{n\geq 1}\frac{1}{ \lambda_{\nu ,n}^{2}}= \frac{Q(\nu +2)}{%
4(\nu+1)Q(\nu) }  \label{S5}
\end{equation}%
and%
\begin{equation}
\sum\limits_{n\geq 1}\frac{1}{\lambda_{\nu ,n}^{4}}=\frac{1}{16(\nu+1)Q(\nu)}%
\left( \frac{Q^2(\nu +2)}{(\nu +1)Q(\nu)}- \frac{Q(\nu +4)}{(\nu+2)}\right) .
\label{S6}
\end{equation}

\section{Main \ Results}

\subsection{Radii of Starlikeness and Convexity of The Functions $f_{ 
\protect\nu },$ $g_{\protect\nu }$ and $h_{\protect\nu }$}

The first principal result we established concerns the radii of starlikeness
and reads as follows.

\begin{theorem}
\label{t2} Let $\beta \in
\lbrack 0,1).$ The following statements hold:

\begin{description}
\item[a)] If $\nu \geq \max\{0,\nu_{0} \}$, $\nu\neq0$  then the radius of starlikeness of order $\beta$ of the
function $f_\nu$ is the smallest positive root of the equation 
\begin{equation*}
ar^3J_\nu^{\prime\prime\prime}(r)+\left(2a+b-a\nu\beta
\right)r^2J_\nu^{\prime\prime}(r)+\left(b+c-b\nu\beta
\right)rJ_\nu^{\prime}(r)-c\nu \beta J_\nu(r)=0.
\end{equation*}

\item[b)] If $\nu \geq \max\{0,\nu_{0} \},$ then the radius of starlikeness of order $\beta$ of the function $%
g_\nu$ is the smallest positive root of the equation 
\begin{equation*}
\left(1-\nu \right)+\frac{ar^3J_\nu^{\prime\prime\prime}(r)+\left(2a+b%
\right)r^2J_\nu^{\prime\prime}(r)+\left(b+c\right)rJ_\nu^{\prime}(r)}{%
ar^2J_\nu^{\prime\prime}(r)+brJ_\nu^{\prime}(r)+cJ_\nu(r)}=\beta
\end{equation*}

\item[c)] If $\nu \geq \max\{0,\nu_{0} \},$ then the radius of starlikeness of order $\beta$ of the function $%
h_\nu$ is the smallest positive root of the equation 
\begin{equation*}
\left(2-\nu \right)+\frac{ar\sqrt{r} J_\nu^{\prime\prime\prime}(\sqrt{r}%
)+\left(2a+b\right)rJ_\nu^{\prime\prime}(\sqrt{r})+\left(b+c\right)\sqrt{r}%
J_\nu^{\prime}(\sqrt{r})}{arJ_\nu^{\prime\prime}(\sqrt{r})+b\sqrt{r}%
J_\nu^{\prime}(\sqrt{r})+cJ_\nu(\sqrt{r})}=2\beta
\end{equation*}
\end{description}
\end{theorem}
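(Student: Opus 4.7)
The proof of all three parts follows the same template, so I would set up a common framework first and then specialize. For each normalization, the plan is to (i) compute the logarithmic derivative $zf'(z)/f(z)$ by taking the logarithm of the Weierstrassian product from Lemma \ref{Le4}, (ii) show that the real part of this logarithmic derivative attains its minimum on $|z|=r$ at the positive real point $z=r$, (iii) verify that the resulting real function of $r$ is strictly decreasing and crosses $\beta$ at a unique point inside $(0,\lambda_{\nu,1})$, and (iv) translate the defining equation into the Bessel function expression in the theorem.

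Using $\Psi_\nu(z)=\prod_{n\ge 1}(1-z^2/\lambda_{\nu,n}^2)$ from Lemma \ref{Le4}, I observe that $f_\nu^\nu = \Psi_\nu$, $g_\nu(z)=z\Psi_\nu(z)$, and $h_\nu(z)=z\Psi_\nu(\sqrt{z})$. Logarithmic differentiation then yields, termwise,
\begin{equation*}
\frac{zf_\nu'(z)}{f_\nu(z)}=1-\frac{1}{\nu}\sum_{n\ge 1}\frac{2z^2}{\lambda_{\nu,n}^2-z^2},\quad
\frac{zg_\nu'(z)}{g_\nu(z)}=1-\sum_{n\ge 1}\frac{2z^2}{\lambda_{\nu,n}^2-z^2},
\end{equation*}
and analogously for $h_\nu$, with $z^2$ replaced by $z$ in the summand. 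The locally uniform convergence of the Weierstrass product granted by Lemma \ref{Le4} justifies termwise differentiation on every compact subset of $\{|z|<\lambda_{\nu,1}\}$.

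The key inequality I would invoke is the elementary bound: for $\lambda>0$ and any $|z|<\lambda$,
\begin{equation*}
\operatorname{Re}\frac{z^2}{\lambda^2-z^2}\le \frac{|z|^2}{\lambda^2-|z|^2},
\end{equation*}
with equality if and only if $z=\pm|z|$; this follows by writing the left-hand side as $(\lambda^2\operatorname{Re}(z^2)-|z|^4)/|\lambda^2-z^2|^2$ and maximizing. Applied termwise, this shows that the real part of each of the three logarithmic derivatives is minimized on $|z|=r$ precisely at $z=r$, so the function is starlike of order $\beta$ on $\mathbb{D}_r$ if and only if the real-variable equation $rf'(r)/f(r)=\beta$ (respectively for $g_\nu$ and $h_\nu$) has not yet been met. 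Since each term $r^2/(\lambda_{\nu,n}^2-r^2)$ is strictly increasing on $(0,\lambda_{\nu,1})$ and tends to $+\infty$ as $r\to\lambda_{\nu,1}^-$, and since the coefficient $1/\nu$ (in part (a)) is positive because $\nu>0$, each of the real functions $r\mapsto rf_\nu'(r)/f_\nu(r)$, $rg_\nu'(r)/g_\nu(r)$, $rh_\nu'(r)/h_\nu(r)$ is strictly decreasing from $1$ toward $-\infty$, hence crosses $\beta\in[0,1)$ at a unique point $r_\beta$ that is the claimed radius.

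The remaining step is purely computational: replace $N_\nu$ by its definition $N_\nu(z)=az^2J_\nu''(z)+bzJ_\nu'(z)+cJ_\nu(z)$ and use
\begin{equation*}
zN_\nu'(z)=az^3J_\nu'''(z)+(2a+b)z^2J_\nu''(z)+(b+c)zJ_\nu'(z),
\end{equation*}
so that $rf_\nu'(r)/f_\nu(r)=(1/\nu)\cdot rN_\nu'(r)/N_\nu(r)$, $rg_\nu'(r)/g_\nu(r)=(1-\nu)+rN_\nu'(r)/N_\nu(r)$, and $rh_\nu'(r)/h_\nu(r)=(1-\nu/2)+\tfrac{1}{2}\cdot \sqrt{r}\,N_\nu'(\sqrt{r})/N_\nu(\sqrt{r})$. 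Equating each of these to $\beta$ (and, in (c), multiplying by $2$) and clearing denominators yields the three equations in the statement. I expect the main obstacle to be the first step — rigorously identifying where the minimum of $\operatorname{Re}(zf'/f)$ on the circle $|z|=r$ is attained — once that inequality is in hand, monotonicity and the algebraic manipulation are routine.
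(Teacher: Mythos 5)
Your proposal is correct and follows essentially the same route as the paper: the Weierstrassian product from Lemma \ref{Le4}, termwise logarithmic differentiation, the inequality $\operatorname{Re}\bigl(z/(\lambda-z)\bigr)\le |z|/(\lambda-|z|)$ applied with $z\mapsto z^{2}$, $\lambda\mapsto\lambda_{\nu,n}^{2}$ (which the paper cites from the literature as (\ref{r2}) while you derive it directly), followed by the monotonicity of the resulting real functions on $(0,\lambda_{\nu,1})$ and the algebraic translation into the Bessel-function form. The only blemish is the identification $f_{\nu}^{\nu}=\Psi_{\nu}$, which should read $f_{\nu}^{\nu}=z^{\nu}\Psi_{\nu}$; your displayed formula for $zf_{\nu}'(z)/f_{\nu}(z)$ already accounts for the extra factor, so nothing downstream is affected.
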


\begin{proof}
Firstly, we prove part \textbf{a} for $\nu \geq \max \{0,\nu_0\},~\nu\neq0 $
and \textbf{b} and \textbf{c} for $\nu \geq \max \{0,\nu_0\}.$ We need to
show that the following inequalities 
\begin{equation}
\operatorname{Re}\left( \frac{zf_{\nu }^{\prime }(z)}{f_{\nu }(z)}\right) >\beta,
~~~ \operatorname{Re}\left( \frac{zg_{\nu }^{\prime }(z)}{g_{\nu }(z)}\right) >\beta
~~~ \text{ and }~~~ \operatorname{Re}\left( \frac{zh_{\nu }^{\prime }(z)}{h_{\nu }(z)%
}\right) >\beta  \label{re1}
\end{equation}%
are valid for $z\in \mathbb{D}_{r_{\beta }^{\ast }(f_{\nu })},~z\in \mathbb{D%
}_{r_{\beta }^{\ast }(g_{\nu })}$ and $z\in \mathbb{D}_{r_{\beta }^{\ast
}(h_{\nu })}$ respectively, and each of the above inequalities does not hold
in larger disks.

When we write the equation (\ref{r11}) in definition of the functions $%
f_{\nu }(z),~g_{\nu }(z)$ and $h_{\nu }(z)$ we get by using logarithmic
derivation 
\begin{eqnarray*}
\frac{zf_{\nu }^{\prime }(z)}{f_{\nu }(z)} &=&\frac{1}{\nu }\frac{ zN_{\nu
}^\prime(z)}{N_{\nu }(z)}=1-\frac{1}{\nu}\sum\limits_{n\geq 1}\frac{2z^{2}}{
\lambda_{\nu ,n} ^{2}-z^{2} },\text{ \ \ }\left(\nu \geq \max
\{0,\nu_0\},~\nu\neq0\right), \\
\frac{zg_{\nu }^{\prime }(z)}{g_{\nu }(z)} &=&(1-\nu) +\frac{zN_{\nu
}^{\prime}(z)}{N_{\nu }(z)}=1- \sum\limits_{n\geq 1}\frac{2z^{2}}{
\lambda_{\nu ,n}^2-z^{2}},\text{ \ \ } \left(\nu \geq \max\{0,\nu_0
\}\right), \\
\frac{zh_{\nu }^{\prime }(z)}{h_{\nu }(z)} &=&(1-\frac{\nu}{2}) +\frac{1}{2} 
\frac{\sqrt{z} N_{\nu }^{\prime}(\sqrt{z})}{N_{\nu }(\sqrt{z})}=1-
\sum\limits_{n\geq 1}\frac{z}{ \lambda_{\nu ,n}^2-z},\text{ \ \ }(\nu \geq
\max\{0,\nu_0 \}).
\end{eqnarray*}%
It is known \cite{Ba1} that if $z\in 
%TCIMACRO{\U{2102} }%
%BeginExpansion
\mathbb{C}
%EndExpansion
$ and $\lambda \in 
%TCIMACRO{\U{211d} }%
%BeginExpansion
\mathbb{R}
%EndExpansion
$ are such that $\lambda >\left\vert z\right\vert ,$ then%
\begin{equation}
\frac{\left\vert z\right\vert }{\lambda -\left\vert z\right\vert }\geq \operatorname{%
Re}\left( \frac{z}{\lambda -z}\right) .  \label{r2}
\end{equation}%
Then the inequality%
\begin{equation*}
\frac{\left\vert z\right\vert ^{2}}{ \lambda_{\nu ,n}^ {2}-\left\vert
z\right\vert ^{2}}\geq \operatorname{Re}\left( \frac{z^{2}}{ \lambda_{\nu
,n}^{2}-z^{2}}\right)
\end{equation*}%
holds for every $\nu\geq \max\{0,\nu_0 \}, \nu \neq 0$ and $z\in \mathbb{D}%
(0,\lambda_{\nu ,1})$ Therefore,%
\begin{eqnarray*}
\operatorname{Re}\left( \frac{zf_{\nu }^{\prime }(z)}{f_{\nu }(z)}\right) &=&1-\frac{%
1}{\nu}\sum\limits_{n\geq 1}\operatorname{Re}\left( \frac{2z^{2}}{ \lambda_{\nu
,n}^{2}-z^{2}}\right) \geq 1-\frac{1}{\nu }\sum\limits_{n\geq 1}\frac{%
2\left\vert z\right\vert ^{2}}{ \lambda_{\nu ,n}^{2}-\left\vert z\right\vert
^{2}}=\frac{\left\vert z\right\vert f_{\nu }^{\prime }(\left\vert
z\right\vert )}{f_{\nu }(\left\vert z\right\vert )}, \\
\operatorname{Re}\left( \frac{zg_{\nu }^{\prime }(z)}{g_{\nu }(z)}\right)
&=&1-\sum\limits_{n\geq 1}\operatorname{Re}\left( \frac{2z^{2}}{ \lambda_{\nu
,n}^{2}-z^{2}}\right) \geq 1-\sum\limits_{n\geq 1}\frac{2\left\vert
z\right\vert ^{2}}{ \lambda_{\nu ,n} ^{2}-\left\vert z\right\vert ^{2}}=%
\frac{\left\vert z\right\vert g_{\nu }^{\prime }(\left\vert z\right\vert )}{%
g_{\nu }(\left\vert z\right\vert ) }, \\
\operatorname{Re}\left( \frac{zh_{\nu }^{\prime }(z)}{h_{\nu }(z)}\right)
&=&1-\sum\limits_{n\geq 1}\operatorname{Re}\left( \frac{z}{ \lambda_{\nu ,n}^{2}-z}%
\right) \geq 1-\sum\limits_{n\geq 1}\frac{\left\vert z\right\vert}{
\lambda_{\nu ,n} ^{2}-\left\vert z\right\vert }=\frac{\left\vert
z\right\vert h_{\nu }^{\prime }(\left\vert z\right\vert )}{h_{\nu
}(\left\vert z\right\vert )},
\end{eqnarray*}%
where equalities are attained only when $z=\left\vert z\right\vert =r.$
Thus, for $r\in \left(0,\lambda_{\nu ,1} \right)$ it follows that 
\begin{equation*}
\inf_{z\in \mathbb{D}_r } \left\lbrace \operatorname{Re} \left( \frac{zf_{\nu
}^{\prime }(z)}{f_{\nu }(z)} \right) \right\rbrace= \frac{rf_{\nu }^{\prime
}(r)}{f_{\nu }(r)},~~~ \inf_{z\in \mathbb{D}_r } \left\lbrace \operatorname{Re}
\left( \frac{zg_{\nu }^{\prime }(z)}{g_{\nu }(z)} \right) \right\rbrace= 
\frac{rg_{\nu }^{\prime }(r)}{g_{\nu }(r)}
\end{equation*}
and 
\begin{equation*}
\inf_{z\in \mathbb{D}_r } \left\lbrace \operatorname{Re} \left( \frac{zh_{\nu
}^{\prime }(z)}{h_{\nu }(z)} \right) \right\rbrace= \frac{rh_{\nu }^{\prime
}(r)}{h_{\nu }(r)}. 
\end{equation*}
On the other and, the mappings $\psi_\nu,~\varphi_\nu,~\phi_\nu:\left(0,%
\lambda_{\nu ,1} \right)\longrightarrow \mathbb{R}$ defined by 
\begin{equation*}
\psi_\nu(r)= \frac{rf_{\nu }^{\prime }(r)}{f_{\nu }(r)}=1-\frac{1}{\nu}%
\sum\limits_{n\geq 1}\left( \frac{2r^{2}}{ \lambda_{\nu ,n}^{2}-r^{2}}%
\right), \text{ \ \ \ } \varphi_\nu(r)= \frac{rg_{\nu }^{\prime }(r)}{g_{\nu
}(r)}=1- \sum\limits_{n\geq 1}\left( \frac{2r^{2}}{ \lambda_{\nu
,n}^{2}-r^{2}}\right) 
\end{equation*}
and 
\begin{equation*}
\phi_\nu(r)= \frac{rh_{\nu }^{\prime }(r)}{h_{\nu }(r)}=1-
\sum\limits_{n\geq 1}\left( \frac{r}{ \lambda_{\nu ,n}^{2}-r}\right). 
\end{equation*}
Since, 
\begin{equation*}
\psi_\nu^\prime(r)= -\frac{1}{\nu}\sum\limits_{n\geq 1}\left( \frac{%
4r\lambda_{\nu,n}^{2}}{ \left(\lambda_{\nu ,n}^{2}-r^{2}\right)^2}\right)<0,%
\text{ \ \ \ } \varphi_\nu^\prime(r)= -\sum\limits_{n\geq 1}\left( \frac{%
4r\lambda_{\nu,n}^{2}}{ \left(\lambda_{\nu ,n}^{2}-r^{2}\right)^2}\right)<0 
\end{equation*}
and 
\begin{equation*}
\phi_\nu^\prime(r)= -\sum\limits_{n\geq 1}\left( \frac{\lambda_{\nu,n}^{2}}{
\left(\lambda_{\nu ,n}^{2}-r\right)^2}\right)<0 
\end{equation*}
$\psi_\nu,~\varphi_\nu,~\phi_\nu$ are strictly decreasing for all $\nu\geq
\max\{0,\nu_0 \}, \nu \neq 0.$ Now, since $\lim_{r\searrow0}\psi_\nu(r)=1>%
\beta,$ $\lim_{r\searrow0}\varphi_\nu(r)=1>\beta,$ $\lim_{r\searrow0}\phi_%
\nu(r)=1>\beta$ and $\lim_{r\nearrow \lambda_{\nu ,1}}\psi_\nu(r)=-\infty,$ $%
\lim_{r\nearrow \lambda_{\nu ,1}}\varphi_\nu(r)=-\infty,$ $\lim_{r\nearrow
\lambda_{\nu ,1}}\phi_\nu(r)=-\infty,$ in view of the minimum principle for
harmonic functions imply that the corresponding inequalities in (\ref{re1})
for $\nu\geq \max\{0,\nu_0 \}, \nu \neq 0$ hold if only if $z\in \mathbb{D}%
_{r_1},$ $z\in \mathbb{D}_{r_2}$ and $z\in \mathbb{D}_{r_3},$ respectively,
where $r_1,$ $r_2$ and $r_3$ is the smallest positive roots of euations 
\begin{equation*}
\frac{rf_{\nu }^{\prime }(r)}{f_{\nu }(r)}=\beta,~~ \frac{rg_{\nu }^{\prime
}(r)}{g_{\nu }(r)}=\beta \text{ and } \frac{rh_{\nu }^{\prime }(r)}{h_{\nu
}(r)}=\beta 
\end{equation*}
which are equivalent to 
\begin{equation*}
\frac{rN_\nu^\prime(r)}{\nu N_\nu(r)}=\beta,~~ \left(1-\nu \right)+\frac{%
rN_\nu^\prime(r)}{ N_\nu(r)}=\beta \text{ and } \left(1-\frac{\nu}{2}
\right)+\frac{1}{2} \frac{\sqrt{r} N_{\nu }^{\prime}(\sqrt{r})}{N_{\nu }(%
\sqrt{r})}=\beta 
\end{equation*}
situated in $\left(0,\lambda_{\nu ,1}\right).$\newline
This completes the proof of part \textbf{a} when $\nu\geq \max\{0,\nu_0 \},
\nu \neq 0$, and parts \textbf{b} and \textbf{c} when $\nu\geq \max\{0,\nu_0
\}.$
\end{proof} 
\begin{eqnarray*}
&&
\begin{tabular}{c|c|c||c|c|c||c|c|c|}
\cline{2-9}
& \multicolumn{8}{|c|}{$r^{\ast }\left( f\right) $} \\ \cline{2-9}
& \multicolumn{2}{|c||}{$b=1$ and $c=0$} &  & \multicolumn{2}{|c||}{$a=1$
and $c=0$} &  & \multicolumn{2}{|c|}{$a=1$ and $b=2$} \\ 
\cline{2-3}\cline{5-6}\cline{8-9}
& $\beta =0$ & $\beta =0.5$ &  & $\beta =0$ & $\beta =0.5$ &  & $\beta =0$ & 
$\beta =0.5$ \\ \hline
\multicolumn{1}{|c|}{$a=2$} & $0.8231$ & $0.6458$ & $b=2$ & $1.0917$ & $%
0.8481$ & $c=2$ & $1.3089$ & $1.0058$ \\ \hline
\multicolumn{1}{|c|}{$a=3$} & $0.7689$ & $0.6045$ & $b=3$ & $1.1774$ & $%
0.9120$ & $c=3$ & $1.3952$ & $1.0671$ \\ \hline
\multicolumn{1}{|c|}{$a=4$} & $0.7382$ & $0.5809$ & $b=4$ & $1.2337$ & $%
0.9539$ & $c=4$ & $1.4708$ & $1.1203$ \\ \hline
\end{tabular}
\\
&& \\
&&\text{\ \ \ \ \ \ \ \ \ \ \ \ \ \ \ \ \textbf{Table1.} Radii of
starlikeness for }f_{\nu }\text{ when }\nu =1.5
\end{eqnarray*}%
\begin{eqnarray*}
&&%
\begin{tabular}{c|c|c||c|c|c||c|c|c|}
\cline{2-9}
& \multicolumn{8}{|c|}{$r^{\ast }\left( g\right) $} \\ \cline{2-9}
& \multicolumn{2}{|c||}{$b=1$ and $c=0$} &  & \multicolumn{2}{|c||}{$a=1$
and $c=0$} &  & \multicolumn{2}{|c|}{$a=1$ and $b=2$} \\ 
\cline{2-3}\cline{5-6}\cline{8-9}
& $\beta =0$ & $\beta =0.5$ &  & $\beta =0$ & $\beta =0.5$ &  & $\beta =0$ & 
$\beta =0.5$ \\ \hline
\multicolumn{1}{|c|}{$a=2$} & $0.7188$ & $0.5483$ & $b=2$ & $0.9477$ & $%
0.7167$ & $c=2$ & $1.1285$ & $0.8459$ \\ \hline
\multicolumn{1}{|c|}{$a=3$} & $0.6723$ & $0.5137$ & $b=3$ & $1.0203$ & $%
0.7697$ & $c=3$ & $1.1995$ & $0.8957$ \\ \hline
\multicolumn{1}{|c|}{$a=4$} & $0.6458$ & $0.4939$ & $b=4$ & $1.0678$ & $%
0.8044$ & $c=4$ & $1.2611$ & $0.9388$ \\ \hline
\end{tabular}
\\
&& \\
&&\text{\ \ \ \ \ \ \ \ \ \ \ \ \ \ \ \ \textbf{Table2.} Radii of
starlikeness for }g_{\nu }\text{ when }\nu =1.5
\end{eqnarray*}%
\begin{eqnarray*}
&&%
\begin{tabular}{c|c|c||c|c|c||c|c|c|}
\cline{2-9}
& \multicolumn{8}{|c|}{$r^{\ast }\left( h\right) $} \\ \cline{2-9}
& \multicolumn{2}{|c||}{$b=1$ and $c=0$} &  & \multicolumn{2}{|c||}{$a=1$
and $c=0$} &  & \multicolumn{2}{|c|}{$a=1$ and $b=2$} \\ 
\cline{2-3}\cline{5-6}\cline{8-9}
& $\beta =0$ & $\beta =0.5$ &  & $\beta =0$ & $\beta =0.5$ &  & $\beta =0$ & 
$\beta =0.5$ \\ \hline
\multicolumn{1}{|c|}{$a=2$} & $0.8009$ & $0.5167$ & $b=2$ & $1.4211$ & $%
0.8982$ & $c=2$ & $2.0638$ & $1.2737$ \\ \hline
\multicolumn{1}{|c|}{$a=3$} & $0.6979$ & $0.4520$ & $b=3$ & $1.6575$ & $%
1.0410$ & $c=3$ & $2.3560$ & $1.4388$ \\ \hline
\multicolumn{1}{|c|}{$a=4$} & $0.6426$ & $0.4171$ & $b=4$ & $1.8225$ & $%
1.1403$ & $c=4$ & $2.6296$ & $1.5905$ \\ \hline
\end{tabular}
\\
&& \\
&&\text{\ \ \ \ \ \ \ \ \ \ \ \ \ \ \ \ \textbf{Table3.} Radii of
starlikeness for }h_{\nu }\text{ when }\nu =1.5
\end{eqnarray*} 
When $\nu=1.5,$ considering the special values of $a,b,c\in \mathbb{R},$ radii of starlikeness of the functions $f_\nu,g_\nu$ and $h_\nu$ is seen from the tables above. If the values of $b$ and $c$ are kept constant and the value of $a$ is increased, radii of starlikeness of the functions $f_\nu,g_\nu$ and $h_\nu$ is monotone decreasing. If the values of $a$ and $c$ are kept constant and the value of $b$ is increased or the values of $a$ and $b$ are kept constant and the value of $c$ is increased radii of starlikeness of the functions $f_\nu,g_\nu$ and $h_\nu$ is monotone increasing. In addition, according to the increasing values of $\beta,$ it is clear that radii of starlikeness of the functions $f_\nu,g_\nu$ and $h_\nu$ is decreasing.
\begin{center}
\includegraphics{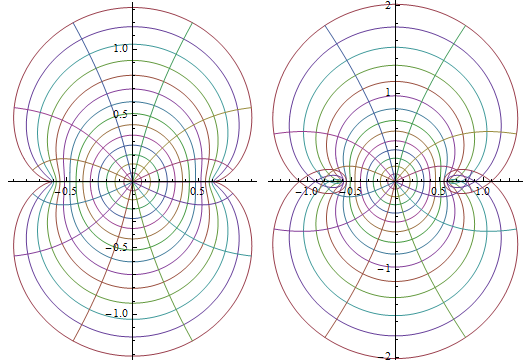}

\end{center}
\begin{center}
\text{\textbf{Figure1.}} Images of function $g_{3/2}(z)$ for $r^\ast(g_{3/2})=0.9477$ and $r^\ast(g_{3/2})=1.2$, respectively
\end{center}

The second principal result we established concerns the radii of convexity
and reads as follows.

\begin{theorem}
\label{t3} Let $\beta \in \lbrack 0,1).$ The following statements hold:
\begin{description}
\item[a)] If $\nu\geq\max\left\{0,\nu_0 \right\},\ \nu \neq 0$
then, the radius $r_{\beta }^{c}(f_{\nu })$ is the smallest positive root of
the equation
\begin{eqnarray*}
& & 1+\frac{rN_{\nu}^{\prime \prime }(r)}{N_{\nu }^{\prime }(r)}+\left(\frac{%
1}{\nu}-1\right)\frac{rN_{\nu}^{ \prime }(r)}{N_{\nu }(r)}=\beta
\end{eqnarray*}
Moreover, $r_{\beta }^{c}(f_{\nu})<\lambda_{\nu ,1}^{\prime}<\lambda_{\nu
,1}.$

\item[b)] If $\nu\geq\max\left\{0,\nu_0 \right\}$
then, the radius $r_{\beta }^{c}(g_{\nu })$ is the smallest positive root of
the equation 
\begin{equation*}
1+\frac{r^2N_{\nu}^{\prime \prime }(r)+\left(2-2\nu\right)rN_{\nu}^{\prime
}(r)+\left(\nu^2-\nu\right)N_{\nu}(r)}{rN_{\nu}^{\prime
}(r)+\left(1-\nu\right)N_{\nu}(r)}=\beta.
\end{equation*}

\item[c)] If $\nu\geq\max\left\{0,\nu_0 \right\}$
then, the radius $r_{\beta }^{c}(h_{\nu })$ is the smallest positive root of
the equation 
\begin{equation*}
1+\frac{rN_{\nu}^{\prime \prime }(\sqrt{r})+\left(3-2\nu\right)\sqrt{r}%
N_{\nu}^{\prime }(\sqrt{r})+\left(\nu^2-\nu\right)N_{\nu}(\sqrt{r})}{2\sqrt{r%
}N_{\nu}^{\prime }(\sqrt{r})+2\left(2-\nu\right)N_{\nu}(\sqrt{r})}=\beta.
\end{equation*}
\end{description}
\end{theorem}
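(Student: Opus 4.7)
The proof will mirror the template already laid down in Theorem \ref{t2}: for each $X\in\{f_\nu,g_\nu,h_\nu\}$ I will write $1+zX''(z)/X'(z)$ as a Mittag--Leffler series in the positive zeros of $X$ and of $X'$, show that the real part of this complex expression is minimized on the positive real axis, and finally use monotonicity plus the intermediate-value theorem to locate the smallest $r>0$ at which the real-valued function $1+rX''(r)/X'(r)$ reaches $\beta$. The translation to the equations in the statement is then bookkeeping using $\Phi_\nu(z)=\frac{2^\nu\Gamma(\nu+1)}{Q(\nu)}N_\nu(z)$ and the definitions in (\ref{F1}).

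Parts (b) and (c) should be the direct ones. From (\ref{S1}), $g_\nu(z)=z\prod_{n\ge 1}(1-z^2/\lambda_{\nu,n}^2)$ is an entire function of genus zero with real simple zeros, so the same Rolle/strict-monotonicity argument used to prove Lemma \ref{Le4} yields positive simple zeros $\alpha_{\nu,n}$ of $g_\nu'$ that interlace with the $\lambda_{\nu,n}$ and the Hadamard product $g_\nu'(z)=\prod_{n\ge 1}(1-z^2/\alpha_{\nu,n}^2)$ (with $g_\nu'(0)=1$ fixing the leading factor). Logarithmic differentiation then gives
\[
1+\frac{zg_\nu''(z)}{g_\nu'(z)} \;=\; 1-\sum_{n\ge 1}\frac{2z^2}{\alpha_{\nu,n}^2-z^2},
\]
and term-by-term application of the inequality (\ref{r2}) shows $\operatorname{Re}\bigl(1+zg_\nu''/g_\nu'\bigr)\ge 1+rg_\nu''(r)/g_\nu'(r)$ for $|z|=r<\alpha_{\nu,1}$. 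Because the real-variable function is strictly decreasing from $1$ to $-\infty$ on $(0,\alpha_{\nu,1})$, the radius of convexity is the unique root of $1+rg_\nu''(r)/g_\nu'(r)=\beta$; expressing $g_\nu'$ and $g_\nu''$ through $N_\nu,N_\nu',N_\nu''$ by direct differentiation of $g_\nu(z)=\frac{2^\nu\Gamma(\nu+1)}{Q(\nu)}z^{1-\nu}N_\nu(z)$ reduces the equation to the form stated. Part (c) is identical after substituting $z\mapsto\sqrt{z}$: $h_\nu(z)=z\prod_n(1-z/\lambda_{\nu,n}^2)$ produces $h_\nu'(z)=\prod_n(1-z/\mu_{\nu,n})$ with positive zeros $\mu_{\nu,n}$ of $h_\nu'$ interlacing $\{\lambda_{\nu,n}^2\}$, hence $1+zh_\nu''(z)/h_\nu'(z)=1-\sum z/(\mu_{\nu,n}-z)$, to which (\ref{r2}) again applies verbatim; the chain rule then recovers the stated equation in $N_\nu(\sqrt{r})$.

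Part (a) is where the real work sits. Using $f_\nu=\Phi_\nu^{1/\nu}$ together with the Mittag--Leffler expansion $\Phi_\nu'(z)/\Phi_\nu(z)=\nu/z-\sum 2z/(\lambda_{\nu,n}^2-z^2)$ from Lemma \ref{Le4} and the analogous expansion for $\Phi_\nu''/\Phi_\nu'$ in terms of the critical points $\lambda_{\nu,n}'$, a direct computation produces
\[
1+\frac{zf_\nu''(z)}{f_\nu'(z)} \;=\; 1+\frac{\nu-1}{\nu}\sum_{n\ge 1}\frac{2z^2}{\lambda_{\nu,n}^2-z^2} - \sum_{n\ge 1}\frac{2z^2}{(\lambda_{\nu,n}')^2-z^2}.
\]
The genuine obstacle is that for $\nu>1$ the two sums appear with opposite signs, so (\ref{r2}) cannot be invoked termwise to bound $\operatorname{Re}(\cdot)$ below by the real-axis value. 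I plan to overcome this by expanding in $z^2$: both sums are geometric in each term, so the above equals $1+\sum_{k\ge 1}2\bigl[\tfrac{\nu-1}{\nu}\sigma_k-\sigma_k'\bigr]z^{2k}$ with $\sigma_k=\sum_n\lambda_{\nu,n}^{-2k}$ and $\sigma_k'=\sum_n(\lambda_{\nu,n}')^{-2k}$. The interlacing $\lambda_{\nu,n}'<\lambda_{\nu,n}$ gives $\sigma_k'>\sigma_k$, and since $(\nu-1)/\nu<1$ for every $\nu>0$ one has $\tfrac{\nu-1}{\nu}\sigma_k<\sigma_k<\sigma_k'$, so every nonconstant Taylor coefficient is strictly negative. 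Any holomorphic function $F$ with $F(0)\in\mathbb{R}$ and real nonpositive Taylor coefficients thereafter automatically satisfies $\operatorname{Re}F(z)\ge F(|z|)$, because $\operatorname{Re}(z^{2k})\le|z|^{2k}$ and the negative coefficients reverse the inequality. This supplies exactly the lower bound required, after which the monotonicity argument identifies $r_\beta^c(f_\nu)$ as the unique root of the equation, and the containment $r_\beta^c(f_\nu)<\lambda_{\nu,1}'<\lambda_{\nu,1}$ follows because the real-valued function tends to $-\infty$ as $r\nearrow\lambda_{\nu,1}'$. Rewriting $1+rf_\nu''(r)/f_\nu'(r)$ through $\nu f_\nu'(z)=\Phi_\nu(z)^{1/\nu-1}\Phi_\nu'(z)$ and $\Phi_\nu=\frac{2^\nu\Gamma(\nu+1)}{Q(\nu)}N_\nu$ produces the equation in the statement.
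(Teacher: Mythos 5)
Your plan for parts (b) and (c) is essentially the paper's proof: Hadamard product for $g_\nu'$ and $h_\nu'$, logarithmic differentiation, termwise application of (\ref{r2}), and the monotonicity/minimum-principle endgame. Part (a) is where you genuinely diverge, and your route is sound. The paper, faced with the same sign problem in
\begin{equation*}
1+\frac{zf_\nu''(z)}{f_\nu'(z)}=1+\frac{\nu-1}{\nu}\sum_{n\geq 1}\frac{2z^{2}}{\lambda_{\nu,n}^{2}-z^{2}}-\sum_{n\geq 1}\frac{2z^{2}}{\lambda_{\nu,n}^{\prime 2}-z^{2}},
\end{equation*}
invokes Lemma 2.1 of Baricz--Sz\'asz \cite{Ba1}, i.e. $\mu\operatorname{Re}\left(\frac{z}{a-z}\right)-\operatorname{Re}\left(\frac{z}{b-z}\right)\geq \mu\frac{|z|}{a-|z|}-\frac{|z|}{b-|z|}$ for $a>b>0$, $\mu\in[0,1]$, applied termwise to the interlaced pairs $(\lambda_{\nu,n},\lambda_{\nu,n}')$. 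You instead expand everything into a power series in $z^{2}$ and observe that the interlacing forces $\sigma_k'>\sigma_k>\frac{\nu-1}{\nu}\sigma_k$, so every nonconstant Taylor coefficient is negative; the bound $\operatorname{Re}F(z)\geq F(|z|)$ then follows from $\operatorname{Re}(z^{2k})\leq|z|^{2k}$. This is correct (the double sum is absolutely convergent on $|z|<\lambda_{\nu,1}'$, so the rearrangement is legitimate), arguably more elementary than importing the cited lemma, and it buys you the strict decrease of $r\mapsto 1+rf_\nu''(r)/f_\nu'(r)$ for free, whereas the paper has to run a separate (and slightly fiddly) computation for $\Lambda_\nu'(r)<0$.

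One step you wave at but do not supply: writing $\Phi_\nu''/\Phi_\nu'$ (equivalently $N_\nu''/N_\nu'$) as a Mittag--Leffler sum over the $\lambda_{\nu,n}'$ requires the Weierstrass product $N_\nu'(z)=\frac{Q(\nu)\nu z^{\nu-1}}{2^{\nu}\Gamma(\nu+1)}\prod_{n\geq 1}\left(1-z^{2}/\lambda_{\nu,n}^{\prime 2}\right)$, hence that \emph{all} zeros of $N_\nu'$ are real and the function has genus zero. Lemma \ref{Le4} only gives you one critical point between consecutive $\lambda_{\nu,n}$; it does not by itself exclude nonreal zeros of $N_\nu'$ or fix the product form. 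The paper closes this by computing the order of growth ($\rho=1/2$) and applying Hadamard's factorization theorem, exactly as you yourself do for $g_\nu'$ and $h_\nu'$. Make that step explicit for $N_\nu'$ and the argument is complete.
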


\begin{proof}
\textbf{a)} Since 
\begin{equation*}
1+\frac{zf_{\nu }^{\prime \prime }(z)}{f_{\nu }^{\prime }(z)}=1+\frac{
zN_{\nu }^{\prime\prime}(z)}{N_{\nu }^{\prime}(z)}+\left( \frac{1}{\nu }%
-1\right) \frac{zN_{\nu }^{\prime}(z)}{N_{\nu }(z)}
\end{equation*}%
and by means of (\ref{r11}) we have%
\begin{equation*}
\frac{zN_{\nu }^{\prime}(z)}{N_{\nu }(z)}=\nu -\sum\limits_{n\geq 1}\frac{%
2z^{2}}{ \lambda_{\nu,n } ^{2}-z^{2}}.
\end{equation*}
Moreover, we obtain 
\begin{equation*}
N_{\nu }^\prime(z)=\sum_{n=0}^{\infty }\frac{\left(2n+\nu\right) Q(2n+\nu
)\left( -1\right) ^{n}}{2.n!\Gamma (n+\nu +1)}\left( \frac{z}{2}\right)
^{2n+\nu }\text{ \ \ }\left( z\in \mathbb{C} \right)
\end{equation*}
and 
\begin{equation*}
\frac{2^\nu \Gamma\left(\nu+1\right)}{ Q(\nu)z^{\nu-1}\nu }N_{\nu
}^\prime(z)=1+\sum_{n=1}^{\infty }\frac{\left(2n+\nu\right) Q(2n+\nu )\left(
-1\right) ^{n}}{2.n!\Gamma (n+\nu +1)}\left( \frac{z}{2}\right) ^{2n+\nu }%
\text{ \ \ }\left( z\in \mathbb{C} \right).
\end{equation*}
Taking into consideration the well-known limit 
\begin{equation*}
\lim\limits_{n\longrightarrow \infty} \frac{\log\Gamma\left(n+c\right)}{n
\log n}=1, 
\end{equation*}
where $c$ is a positive constant, and Levin \cite{Le} we infer that above entire
function is of growth order $\rho=\frac{1}{2}.$ Namely, for $\nu>
\max\left\{0,\nu_0 \right\} $ we have that as  
\begin{equation*}
\lim\limits_{n\longrightarrow \infty}\frac{n \log n}{n\log4+\log\left[%
\Gamma\left(n+\nu+1 \right) \right]+\log\left[\Gamma\left(n+1 \right) \right]%
-\log\left[Q\left(2n+\nu\right) \right]-\log\left(2n+\nu\right)} =\frac{1}{2}
. 
\end{equation*}
Now, by applying Hadamard's Theorem \cite{Le} we can write the infinite product
representation of $N_\nu^\prime(z)$ as follows:

\begin{equation*}
\frac{Q(\nu ) z^{\nu-1 } \nu}{2^{\nu }\Gamma (\nu +1)}\prod\limits_{n\geq
1}\left( 1-\frac{z^{2}}{ \lambda_{\nu ,n} ^{\prime2}}\right)
\end{equation*}
where $\lambda_{\nu ,n} ^{\prime}$ denotes the $n$th positive zero of the
function $N_\nu^\prime(z).$

Observe also that 
\begin{equation*}
1+\frac{zf_{\nu }^{\prime \prime }(z)}{f_{\nu }^{\prime }(z)}=1-\left( \frac{%
1}{\nu }-1\right) \sum\limits_{n\geq 1}\frac{2z^{2}}{ \lambda_{\nu ,n}
^{2}-z^{2}}-\sum\limits_{n\geq 1}\frac{2z^{2}}{\lambda_{\nu ,n}
^{\prime2}-z^{2}}.
\end{equation*}%
Now, by using the inequality (\ref{r2}), for all $z\in \mathbb{D}%
_{\lambda_{\nu ,1}^{\prime}}$ and $1\geq \nu>\max\left\{0,\nu_0 \right\}$ we
obtain the inequality 
\begin{eqnarray*}
\operatorname{Re}\left( 1+\frac{zf_{\nu }^{\prime \prime }(z)}{f_{\nu }^{\prime }(z)}%
\right) &=&1-\left( \frac{1}{\nu }-1\right) \sum\limits_{n\geq 1}\operatorname{Re}%
\left( \frac{2z^{2}}{ \lambda_{\nu ,n} ^{2}-z^{2}}\right)
-\sum\limits_{n\geq 1}\operatorname{Re}\left( \frac{2z^{2}}{\lambda_{\nu ,n}
^{\prime2}-z^{2}}\right) \\
&\geq &1-\left( \frac{1}{\nu }-1\right) \sum\limits_{n\geq 1}\frac{2r^{2} }{
\lambda_{\nu,n } ^{2}-r^{2}}-\sum\limits_{n\geq 1}\frac{2r^{2}}{\lambda_{\nu
,n} ^{\prime2}-r^{2}}
\end{eqnarray*}
where $\left\vert z\right\vert =r.$ Moreover, observe that if we use the
inequality \cite[Lemma 2.1]{Ba1}%
\begin{equation*}
\mu \operatorname{Re}\left( \frac{z}{a-z}\right) -\operatorname{Re}\left( \frac{z}{b-z}%
\right) \geq \mu \frac{\left\vert z\right\vert }{a-\left\vert z\right\vert }-%
\frac{\left\vert z\right\vert }{b-\left\vert z\right\vert }
\end{equation*}%
where $a>b>0,$ $\mu \in \lbrack 0,1]$ and $z\in 
%TCIMACRO{\U{2102} }%
%BeginExpansion
\mathbb{C}
%EndExpansion
$ such that $|z|<b$, then we get that the above inequality is also valid
when $\nu\geq 1$. Here we used that the zeros of $N_{\nu}$ and  $%
N_{\nu}^\prime$ are interlacing according to Lemma \ref{Le4}. The above
inequality implies for $r\in (0,\lambda_{\nu ,1}^{\prime})$%
\begin{equation*}
\underset{z\in \mathbb{D}_{r}}{\inf }\left\{ \operatorname{Re}\left( 1+\frac{%
zf_{\nu}^{\prime \prime }(z)}{f_{\nu }^{\prime }(z)}\right) \right\} =1+%
\frac{rf_{\nu }^{\prime \prime }(r)}{f_{\nu }^{\prime }(r)}.
\end{equation*}%
On the other hand, we define the function $\Lambda _{\nu }:(0,\lambda_{\nu
,1}^{\prime})\rightarrow \mathbb{R} ,$ 
\begin{equation*}
\Lambda _{\nu }(r)=1+\frac{rf_{\nu }^{\prime \prime }(r)}{f_{\nu }^{\prime
}(r)}.
\end{equation*}

Since the zeros of $N_{\nu}$ and derivative of $N_{\nu}$ are interlacing
according to Lemma \ref{Le4} and $r<\lambda_{\nu ,1}^{\prime}<\lambda_{\nu ,1}
$ $\left( \text{or }r<\sqrt{\lambda_{\nu ,1}\lambda_{\nu ,1}^{\prime}}%
\right) $ for all $\nu>\max\left\{0,\nu_0 \right\}$ we have 
\begin{equation*}
\left( \lambda_{\nu ,n}\right) \left( \lambda_{\nu ,n}^{\prime
2}-r^{2}\right) -\left( \lambda_{\nu ,n}^{\prime}\right) \left( \lambda_{\nu
,n}^{2}-r^{2}\right) <0.
\end{equation*}%
Thus following inequality 
\begin{eqnarray*}
\Lambda _{\nu }^\prime(r) &=&-\left( \frac{1}{\nu }-1\right)
\sum\limits_{n\geq 1}\frac{4r\lambda_{\nu ,n} ^{2}}{ \left(\lambda_{\nu ,n}
^{2}-r^{2}\right)^2}-\sum\limits_{n\geq 1}\frac{4r\lambda_{\nu ,n} ^{\prime2}%
}{\left(\lambda_{\nu ,n} ^{\prime2}-r^{2}\right)^2} \\
&<& \sum\limits_{n\geq 1}\frac{4r\lambda_{\nu ,n} ^{2}}{ \left(\lambda_{\nu
,n} ^{2}-r^{2}\right)^2}-\sum\limits_{n\geq 1}\frac{4r\lambda_{\nu ,n}
^{\prime2}}{\left(\lambda_{\nu ,n} ^{\prime2}-r^{2}\right)^2} \\
&=&4r\sum\limits_{n\geq 1}\frac{\left( \lambda_{\nu ,n}\right)^2 \left(
\lambda_{\nu ,n}^{\prime 2}-r^{2}\right)^2 -\left( \lambda_{\nu
,n}^{\prime}\right)^2 \left( \lambda_{\nu ,n}^{2}-r^{2}\right)^2 }{\left(
\lambda_{\nu ,n}^{2}-r^{2}\right)^2 \left( \lambda_{\nu ,n}^{\prime
2}-r^{2}\right)^2 }<0
\end{eqnarray*}%
is satisfied. Consequently, the function $\Lambda _{\nu }$ is strictly
decreasing. Observe also that $\lim_{r\searrow 0}\Lambda _{\nu }(r)=1>\beta $
and $\lim_{r\nearrow \lambda_{\nu ,1}^{\prime}}\Lambda _{\nu }(r)=-\infty ,$
which means that for $z\in \mathbb{D}_{r_{4}}$ we have%
\begin{equation*}
\operatorname{Re}\left( 1+\frac{zf_{\nu }^{\prime \prime }(z)}{f_{\nu }^{\prime }(z)}%
\right) >\beta
\end{equation*}%
if and ony if $r_{4}$ is the unique root of 
\begin{equation*}
1+\frac{rf_{\nu}^{\prime \prime }(r)}{f_{\nu }^{\prime }(r)}=\beta \text{ or 
} 1+\frac{rN_{\nu}^{\prime \prime }(r)}{N_{\nu }^{\prime }(r)}+\left(\frac{1%
}{\nu}-1\right)\frac{rN_{\nu}^{ \prime }(r)}{N_{\nu }(r)}=\beta,
\end{equation*}%
situated in $(0,\lambda_{\nu ,1}^{\prime}).$

\textbf{b)} Observe that 
\begin{equation*}
1+\frac{zg_{\nu }^{\prime \prime }(z)}{g_{\nu}^{\prime }(z)}=1+ \frac{%
z^2N_{\nu }^{\prime \prime}(z)+2(1-\nu)zN_{\nu }^{\prime}(z)+(\nu^2
-\nu)N_{\nu }(z)}{zN_{\nu }^{\prime}(z)+(1-\nu)N_{\nu }(z)}.
\end{equation*}
By using (\ref{F1}) and (\ref{r11}) we have 
\begin{eqnarray}
g_{\nu }^{\prime }(z) &=&\frac{2^{\nu }\Gamma (\nu +1)z^{-\nu }}{Q(\nu)}%
\left[ (1-\nu )N_{\nu }(z)+zN_{\nu }^{\prime}(z)\right]  \label{r31} \\
&=&\sum_{n=0}^{\infty }\frac{\left( -1\right) ^{n}(2n+1)Q(2n+\nu)\Gamma (\nu
+1)}{n!\Gamma (n+\nu +1)Q(\nu)}\left( \frac{z}{2}\right) ^{2n}  \notag
\end{eqnarray}%
and 
\begin{equation*}
\lim\limits_{n\longrightarrow \infty}\frac{n \log n}{%
\begin{array}{c}
\lbrack n\log4+\log\left[\Gamma\left(n+\nu+1 \right) \right]+\log\left[%
\Gamma\left(n+1 \right) \right]+\log\left[Q\left(\nu\right)\right] \\ 
-\log\left[Q\left(2n+\nu\right) \right]-\log\left(2n+1\right)-\log\left[%
\Gamma\left(\nu+1 \right) \right]%
\end{array}%
} = \frac{1}{2} .
\end{equation*}%
Here, we used $n!=\Gamma (n+1)$ and $\underset{n\rightarrow \infty }{\lim }%
\frac{\log \Gamma (an+b)}{n\log n}=a,$ where $b$ and $c$ are positive
constants. So, by applying Hadamard's Theorem \cite[p. 26]{Le} we can write
the infinite product representation of $g_{\nu }^{\prime }(z)$ as follows: 
\begin{equation}
g_{\nu }^{\prime }(z)=\prod\limits_{n\geq 1}\left( 1-\frac{z^{2}}{ \delta
_{\nu ,n}^{2}}\right) ,  \label{r4}
\end{equation}%
where $\delta _{\nu ,n}$ denotes the $n$th positive zero of the function $%
g_{\nu }^{\prime }.$ From Lemma \ref{Le4} for $\nu>\max\left\{0,\nu_0
\right\}$ the function $g_{\nu }^{\prime }\in \mathcal{LP}$, and the
smallest positive zero of $g_{\nu }^{\prime }$ does not exceed the first
positive zero of $N_{\nu }.$

By means of (\ref{r4}) we have 
\begin{equation*}
1+\frac{zg_{\nu }^{\prime \prime }(z)}{g_{\nu }^{\prime }(z)} =
1-\sum\limits_{n\geq 1}\operatorname{Re}\left( \frac{2z^{2}}{\delta_{\nu ,n}
^{2}-z^{2}}\right).
\end{equation*}
By using the inequality (\ref{r2}), for all $z\in \mathbb{D}_{\delta _{\nu
,n}}$ we obtain the inequality 
\begin{equation*}
\operatorname{Re}\left( 1+\frac{zg_{\nu }^{\prime \prime }(z)}{g_{\nu }^{\prime }(z)}%
\right) \geq 1 -\sum\limits_{n\geq 1}\frac{2r^{2}}{\delta_{\nu ,n} ^{2}-r^{2}%
}
\end{equation*}%
where $\left\vert z\right\vert =r.$ Thus, for $r\in (0,\delta _{\nu ,1})$ we
get 
\begin{equation*}
\underset{z\in \mathbb{D}_{r}}{\inf }\left\{ \operatorname{Re}\left( 1+\frac{zg_{\nu
}^{\prime \prime }(z)}{g_{\nu }^{\prime }(z)}\right) \right\} =1+\frac{%
rg_{\nu }^{\prime \prime }(r)}{g_{\nu }^{\prime }(r)}.
\end{equation*}%
The function $\varTheta_{\nu}:(0,\delta _{\nu ,1})\rightarrow 
%TCIMACRO{\U{211d} }%
%BeginExpansion
\mathbb{R}
%EndExpansion
,$ defined by%
\begin{equation*}
\varTheta_{\nu }(r)=1+\frac{rg_{\nu }^{\prime \prime }(r)}{g_{\nu }^{\prime
}(r)},
\end{equation*}%
is strictly decreasing and $\lim_{r\searrow 0}\varTheta_{\nu }(r)=1>\beta $
and $\lim_{r\nearrow \delta _{\nu ,1}}\varTheta_{\nu ,n}(r)=-\infty $.
Consequently, in view of the minimum principle for harmonic functions for $%
z\in \mathbb{D}_{r_{5}}$ we have that%
\begin{equation*}
\operatorname{Re}\left( 1+\frac{zg_{\nu }^{\prime \prime }(z)}{g_{\nu }^{\prime }(z)}%
\right) >\beta
\end{equation*}%
if and ony if $r_{5}$ is the unique root of 
\begin{equation*}
1+\frac{rg_{\nu }^{\prime \prime }(r)}{g_{\nu }^{\prime }(r)}=\beta ,
\end{equation*}%
situated in $(0,\delta _{\nu ,1}).$

\textbf{c)} Observe that 
\begin{equation*}
1+\frac{zh_{\nu }^{\prime \prime }(z)}{h_{\nu }^{\prime }(z)}=1+\frac{%
zN_{\nu}^{\prime \prime }(\sqrt{z})+\left(3-2\nu\right)\sqrt{z}%
N_{\nu}^{\prime }(\sqrt{z})+\left(\nu^2-\nu\right)N_{\nu}(\sqrt{z})}{2\sqrt{z%
}N_{\nu}^{\prime }(\sqrt{z})+2\left(2-\nu\right)N_{\nu}(\sqrt{z})}.
\end{equation*}

By using (\ref{F1}) and (\ref{r11}) we have that%
\begin{eqnarray}
h_{\nu }^{\prime }(z) &=&\frac{2^{\nu -1}\Gamma (\nu +1)z^{\frac{-\nu }{2}}}{%
Q\left(\nu \right)} \left[ (2-\nu )N_{\nu }(\sqrt{z})+\sqrt{z}N_{\nu
}^{\prime}(\sqrt{z})\right]  \label{r32} \\
&=&\sum_{n=0}^{\infty }\frac{\left( -1\right) ^{n}(n+1)\Gamma (\nu
+1)Q\left( 2n+\nu \right)}{n!\Gamma (n+\nu +1)Q\left(\nu \right)}\left( 
\frac{z }{4}\right) ^{n}  \notag
\end{eqnarray}%
and 
\begin{equation*}
\lim\limits_{n\longrightarrow \infty}\frac{n \log n}{%
\begin{array}{c}
\lbrack n\log4+\log\left[\Gamma\left(n+\nu+1 \right) \right]+\log\left[%
\Gamma\left(n+1 \right) \right]+\log\left[Q\left(\nu\right)\right] \\ 
-\log\left[Q\left(2n+\nu\right) \right]-\log\left(n+1\right)-\log\left[%
\Gamma\left(\nu+1 \right) \right]%
\end{array}%
} = \frac{1}{2} .
\end{equation*}%
So, by applying Hadamard's Theorem \cite[p. 26]{Le} we can write the
infinite product representation of $h_{\nu }^{\prime }(z)$ as follows:%
\begin{equation}
h_{\nu }^{\prime }(z)=\prod\limits_{n\geq 1}\left( 1-\frac{z}{\gamma _{\nu
,n}^2}\right) ,  \label{r41}
\end{equation}%
where $\gamma _{\nu }$ denotes the $n$th positive zero of the function $%
h_{\nu ,n}^{\prime }.$ From Lemma \ref{Le4} for $\nu>\max\left\{0,\nu_0
\right\}$ the function $h_{\nu }^{\prime }\in \mathcal{LP}$, and the
smallest positive zero of $h_{\nu }^{\prime }$ does not exceed the first
positive zero of $N_{\nu }.$

By means of (\ref{r4}) we have 
\begin{equation*}
1+\frac{zh_{\nu }^{\prime \prime }(z)}{h_{\nu }^{\prime }(z)}%
=1-\sum\limits_{n\geq 1}\frac{z}{\gamma _{\nu ,n}^{2}-z}.
\end{equation*}%
By using the inequality (\ref{r2}), for all $z\in \mathbb{D}_{\gamma _{\nu
,n}}$ we obtain the inequality%
\begin{equation*}
\operatorname{Re}\left( 1+\frac{zh_{\nu }^{\prime \prime }(z)}{h_{\nu }^{\prime }(z)}%
\right) \geq 1-\sum\limits_{n\geq 1}\frac{r}{\gamma_{\nu ,n}^{2}-r},
\end{equation*}%
where $\left\vert z\right\vert =r.$ Thus, for $r\in (0,\gamma _{\nu ,1})$ we
get%
\begin{equation*}
\underset{z\in \mathbb{D}_{r}}{\inf }\left\{ \operatorname{Re}\left( 1+\frac{zh_{\nu
}^{\prime \prime }(z)}{h_{\nu }^{\prime }(z)}\right) \right\} =1+\frac{%
rh_{\nu}^{\prime \prime }(r)}{h_{\nu }^{\prime }(r)}.
\end{equation*}%
The function $\Upsilon_{\nu}:(0,\gamma _{\nu ,1})\rightarrow 
%TCIMACRO{\U{211d} }%
%BeginExpansion
\mathbb{R}
%EndExpansion
,$ defined by%
\begin{equation*}
\Upsilon_{\nu}(r)=1+\frac{rh_{\nu }^{\prime \prime }(r)}{h_{\nu }^{\prime
}(r)},
\end{equation*}%
is strictly decreasing and $\lim_{r\searrow 0}\Upsilon_{\nu }(r)=1>\beta $
and $\lim_{r\nearrow \gamma _{\nu ,1}}\Upsilon_{\nu }(r)=-\infty $.
Consequently, in view of the minimum principle for harmonic functions for $%
z\in \mathbb{D}_{r_{6}}$ we have that%
\begin{equation*}
\operatorname{Re}\left( 1+\frac{zh_{\nu }^{\prime \prime }(z)}{h_{\nu }^{\prime }(z)}%
\right) >\beta
\end{equation*}%
if and ony if $r_{6}$ is the unique root of 
\begin{equation*}
1+\frac{rh_{\nu }^{\prime \prime }(r)}{h_{\nu }^{\prime }(r)}=\beta ,
\end{equation*}%
situated in $(0,\gamma _{\nu ,1}).$
\end{proof}
\begin{eqnarray*}
&&%
\begin{tabular}{c|c|c||c|c|c||c|c|c|}
\cline{2-9}
& \multicolumn{8}{|c|}{$r_c\left( f\right) $} \\ \cline{2-9}
& \multicolumn{2}{|c||}{$b=1$ and $c=0$} &  & \multicolumn{2}{|c||}{$a=1$
and $c=0$} &  & \multicolumn{2}{|c|}{$a=1$ and $b=2$} \\ 
\cline{2-3}\cline{5-6}\cline{8-9}
& $\beta =0$ & $\beta =0.5$ &  & $\beta =0$ & $\beta =0.5$ &  & $\beta =0$ & 
$\beta =0.5$ \\ \hline
\multicolumn{1}{|c|}{$a=2$} & $1.0057$ & $0.7896$ & $b=2$ & $1.1515$ & $
0.8992$ & $c=2$ & $1.2412$ & $0.9653$ \\ \hline
\multicolumn{1}{|c|}{$a=3$} & $0.9810$ & $0.7709$ & $b=3$ & $1.2069$ & $
0.9408$ & $c=3$ & $1.2800$ & $0.9938$ \\ \hline
\multicolumn{1}{|c|}{$a=4$} & $0.9676$ & $0.7607$ & $b=4$ & $1.2460$ & $%
0.9702$ & $c=4$ & $1.3155$ & $1.0197$ \\ \hline
\end{tabular}
\\
&& \\
&&\text{\ \ \ \ \ \ \ \ \ \ \ \ \ \ \ \ \textbf{Table4.} Radii of
convexity for }f_{\nu }\text{ when }\nu =2.5
\end{eqnarray*}
\begin{eqnarray*}
&&%
\begin{tabular}{c|c|c||c|c|c||c|c|c|}
\cline{2-9}
& \multicolumn{8}{|c|}{$r_c\left( g\right) $} \\ \cline{2-9}
& \multicolumn{2}{|c||}{$b=1$ and $c=0$} &  & \multicolumn{2}{|c||}{$a=1$
and $c=0$} &  & \multicolumn{2}{|c|}{$a=1$ and $b=2$} \\ 
\cline{2-3}\cline{5-6}\cline{8-9}
& $\beta =0$ & $\beta =0.5$ &  & $\beta =0$ & $\beta =0.5$ &  & $\beta =0$ & 
$\beta =0.5$ \\ \hline
\multicolumn{1}{|c|}{$a=2$} & $0.6839$ & $0.5219$ & $b=2$ & $0.7769$ & $
0.5913$ & $c=2$ & $0.8325$ & $0.6323$ \\ \hline
\multicolumn{1}{|c|}{$a=3$} & $0.6680$ & $0.5101$ & $b=3$ & $0.8122$ & $
0.6176$ & $c=3$ & $0.8563$ & $0.6498$ \\ \hline
\multicolumn{1}{|c|}{$a=4$} & $0.6594$ & $0.5036$ & $b=4$ & $0.8371$ & $%
0.6361$ & $c=4$ & $0.8780$ & $0.6658$ \\ \hline
\end{tabular}
\\
&& \\
&&\text{\ \ \ \ \ \ \ \ \ \ \ \ \ \ \ \ \textbf{Table5.} Radii of
convexity for }g_{\nu }\text{ when }\nu =2.5
\end{eqnarray*}
\begin{eqnarray*}
&&%
\begin{tabular}{c|c|c||c|c|c||c|c|c|}
\cline{2-9}
& \multicolumn{8}{|c|}{$r_c\left( h\right) $} \\ \cline{2-9}
& \multicolumn{2}{|c||}{$b=1$ and $c=0$} &  & \multicolumn{2}{|c||}{$a=1$
and $c=0$} &  & \multicolumn{2}{|c|}{$a=1$ and $b=2$} \\ 
\cline{2-3}\cline{5-6}\cline{8-9}
& $\beta =0$ & $\beta =0.5$ &  & $\beta =0$ & $\beta =0.5$ &  & $\beta =0$ & 
$\beta =0.5$ \\ \hline
\multicolumn{1}{|c|}{$a=2$} & $1.4835$ & $1.0997$ & $b=2$ & $1.9725$ & $
1.4489$ & $c=2$ & $2.3191$ & $1.6906$ \\ \hline
\multicolumn{1}{|c|}{$a=3$} & $1.4080$ & $1.0453$ & $b=3$ & $2.1779$ & $
1.5946$ & $c=3$ & $2.4797$ & $1.8014$ \\ \hline
\multicolumn{1}{|c|}{$a=4$} & $1.3681$ & $1.0165$ & $b=4$ & $2.3289$ & $%
1.7016$ & $c=4$ & $2.6322$ & $1.9060$ \\ \hline
\end{tabular}
\\
&& \\
&&\text{\ \ \ \ \ \ \ \ \ \ \ \ \ \ \ \ \textbf{Table6.} Radii of
convexity for }h_{\nu }\text{ when }\nu =2.5
\end{eqnarray*}

In the tables above, convexity radii according to special values of $a,b,c\in \mathbb{R}$ showed a monotony similar to radii of starlikeness.

\subsection{Bounds for Radii of Starlikeness and Convexity of The Functions $%
g_{\protect\nu }$ and $h_{\protect\nu }$}

In this subsection we consider two different functions $g_{\nu }$ and $%
h_{\nu }$ which are normalized forms of the function $N_\nu$ given by (\ref%
{F1}) . Here firstly our aim is to show that the radii of univalence of
these functions correspond to the radii of starlikeness.
\newpage
\begin{theorem}
\label{t4} The following statements hold:
\begin{description}
\item[a)] If $\nu >\max\{0,\nu_0 \},$ then $r^{\ast }(g_{\nu})$ satisfies
the inequalities 
\begin{equation*}
r^{\ast }(g_{\nu })<\sqrt{\frac{2(\nu+1)Q(\nu )}{Q(\nu +2)},}
\end{equation*}
\begin{equation*}
2\sqrt{\frac{(\nu+1)Q(\nu)}{3Q(\nu +2)}}<r^{\ast }(g_{\nu })<2\sqrt{\frac{%
Q(\nu+2)}{\frac{3Q^2(\nu +2)}{(\nu+1)Q(\nu)}-\frac{5Q(\nu+2) }{3(\nu+2)}}}
\end{equation*}
and 
\begin{equation*}
\sqrt[4]{\frac{16(\nu+1)^2(\nu +2)Q^2(\nu)}{9(\nu +2)Q^2(\nu
+2)-5(\nu+1)Q(\nu)Q(\nu +4) }}< r^{\ast }(g_{\nu })
\end{equation*}
\begin{equation*}
<\sqrt{ \frac{8(\nu+1)(\nu+3)Q(\nu)\left[9(\nu+2)Q^2(\nu
+2)-5(\nu+1)Q(\nu)Q(\nu+4) \right]}{ 9(\nu+3)Q(\nu+2)\left[6(\nu+2)Q^2(\nu
+2)-5(\nu+1)^2Q(\nu)Q(\nu+4) \right]+7(\nu+1)^2Q^2(\nu)Q(\nu+6)} }.
\end{equation*}

\item[b)] If $\nu >\max\{0,\nu_0 \},$ then $r^{\ast }(h_{\nu })$ satisfies
the inequalities 
\begin{equation*}
r^{\ast }(h_{\nu })<\frac{2(\nu+1)Q(\nu)}{Q(\nu +2)},
\end{equation*}
\begin{equation*}
\frac{2(\nu+1)Q(\nu)}{3Q(\nu +2)}< r^{\ast }(h_{\nu })<\frac{2Q(\nu+2)}{%
\frac{Q^2(\nu +2)}{(\nu+1)Q(\nu)}-\frac{3Q(\nu+4) }{4(\nu+2)}}
\end{equation*}
and 
\begin{equation*}
\sqrt{ \frac{4(\nu+1)Q(\nu )}{ \frac{Q^2(\nu +2)}{(\nu+1)Q(\nu)}-\frac{%
3Q(\nu +4)}{4(\nu +2)}} }< r^{\ast }(h_{\nu })
\end{equation*}
\begin{equation*}
<\frac{4(\nu+3)Q(\nu)\left[-4(\nu+2)Q^2(\nu+2)+3(\nu+1)^2Q(\nu) Q(\nu+4) %
\right] }{(\nu+3)Q(\nu+2)\left[8(\nu+2)Q^2(\nu+2)-9(\nu+1)Q(\nu)Q(\nu+4) %
\right]+2(\nu+1)^2Q^2(\nu)Q(\nu+6)}
\end{equation*}
\end{description}
\end{theorem}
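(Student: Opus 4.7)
The idea is to reduce each radius to the smallest positive root of a power-series equation whose coefficients are the Euler-Rayleigh sums $\sigma_k=\sum_{n\geq 1}\lambda_{\nu,n}^{-2k}$, obtain the first bound by truncating that series, and then sharpen to the remaining bounds by identifying the radius with the first positive zero of $g_\nu'$ or $h_\nu'$ and applying the classical Euler-Rayleigh inequalities to those derivatives.

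\textbf{Step 1.} Starting from the Mittag-Leffler expansions $\frac{rg_\nu'(r)}{g_\nu(r)}=1-\sum_n 2r^2/(\lambda_{\nu,n}^2-r^2)$ and $\frac{rh_\nu'(r)}{h_\nu(r)}=1-\sum_n r/(\lambda_{\nu,n}^2-r)$ derived in the proof of Theorem \ref{t2}, the defining equations at $\beta=0$ become $\sum_{n\geq 1}2r^2/(\lambda_{\nu,n}^2-r^2)=1$ and $\sum_{n\geq 1}r/(\lambda_{\nu,n}^2-r)=1$. Expanding each summand geometrically and interchanging summations (legitimate on $(0,\lambda_{\nu,1}^2)$ since every term is positive) rewrites them as $2\sum_{k\geq 1}\sigma_k s^k=1$ with $s=r^2$, and $\sum_{k\geq 1}\sigma_k r^k=1$. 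Dropping all but the leading term yields $r^{\ast}(g_\nu)^2<1/(2\sigma_1)$ and $r^{\ast}(h_\nu)<1/\sigma_1$, which after substituting $\sigma_1$ from (\ref{S5}) produces the first bound in each of parts (a) and (b).

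\textbf{Step 2.} For the sharper bounds I would identify $r^{\ast}(g_\nu)=\delta_{\nu,1}$ with the smallest positive zero of $g_\nu'$ and $r^{\ast}(h_\nu)=\gamma_{\nu,1}$ with the smallest positive zero of $h_\nu'$; both identifications follow from the minimum-principle argument used in the proof of Theorem \ref{t2} together with the interlacing supplied by Lemma \ref{Le4}. The Hadamard factorizations (\ref{r4}) and (\ref{r41}) then give $g_\nu'(z)=\prod_n(1-z^2/\delta_{\nu,n}^2)$ and $h_\nu'(z)=\prod_n(1-z/\gamma_{\nu,n}^2)$. Setting $\tau_k=\sum_n\delta_{\nu,n}^{-2k}$ and $\eta_k=\sum_n\gamma_{\nu,n}^{-2k}$, the log expansions $-\log g_\nu'(z)=\sum_k(\tau_k/k)z^{2k}$ and $-\log h_\nu'(z)=\sum_k(\eta_k/k)z^{k}$, matched with the Maclaurin series coming from (\ref{r31}) and (\ref{r32}), yield via Newton--Girard
\[
\tau_1=3\sigma_1,\quad \tau_2=4\sigma_1^2+5\sigma_2,\qquad \eta_1=2\sigma_1,\quad \eta_2=\sigma_1^2+3\sigma_2,
\]
together with analogous closed-form expressions for $\tau_3,\eta_3$ involving one further Euler-Rayleigh sum $\sigma_3$. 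This $\sigma_3$ I would extract by pushing the expansion (\ref{S3})--(\ref{S4}) to the $z^7$-coefficient and comparing with (\ref{S2}), obtaining the identity $2\sigma_3-3\sigma_1\sigma_2+\sigma_1^3=Q(\nu+6)/[64(\nu+1)(\nu+2)(\nu+3)Q(\nu)]$.

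\textbf{Step 3.} Finally, the classical Euler-Rayleigh inequalities $\tau_k^{-1/k}\leq\delta_{\nu,1}^2\leq\tau_k/\tau_{k+1}$ and $\eta_k^{-1/k}\leq\gamma_{\nu,1}^2\leq\eta_k/\eta_{k+1}$, valid because $g_\nu', h_\nu'\in\mathcal{LP}$ with only positive zeros (Lemma \ref{Le4} and the Hadamard reasoning in the proof of Theorem \ref{t3}), applied for $k=1$ and $k=2$ produce the remaining four bounds in each part---one lower and one upper per index---completing the chain of three bound-pairs stated in (a) and (b). The conceptual work ends here; the main obstacle is then purely algebraic, namely substituting the rational expressions for $\sigma_1,\sigma_2,\sigma_3$ in terms of $Q(\nu), Q(\nu+2), Q(\nu+4), Q(\nu+6)$ into $\tau_k^{-1/k}$, $\tau_k/\tau_{k+1}$, $\eta_k^{-1/k}$ and $\eta_k/\eta_{k+1}$ and simplifying to the compact rational closed forms printed in the statement. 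This is lengthy but mechanical, and the $h_\nu$ case is structurally parallel to the $g_\nu$ case under the substitution $r^2\leftrightarrow r$.
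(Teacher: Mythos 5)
Your proposal follows essentially the same route as the paper's proof: the crude first bound comes from the Kreyszig--Todd implicit relation truncated at the first Rayleigh sum (\ref{S5}), and the sharper bounds come from identifying $r^{\ast }(g_{\nu })$ and $r^{\ast }(h_{\nu })$ with the smallest positive zeros of $g_{\nu }^{\prime }$ and $h_{\nu }^{\prime }$, Hadamard-factoring these derivatives as in (\ref{r4}) and (\ref{r41}), and applying the Euler--Rayleigh inequalities for $k=1,2$. Your only real variation is computational: you obtain the power sums of the derivatives' zeros by Newton--Girard from the $\sigma _{k}$ of $g_{\nu }$ itself (using that the elementary symmetric functions get multiplied by $2k+1$, resp.\ $k+1$, under differentiation), whereas the paper extracts them by dividing the Maclaurin series of $g_{\nu }^{\prime \prime }$ by that of $g_{\nu }^{\prime }$; I checked that your $\tau _{1}=3\sigma _{1}$, $\tau _{2}=4\sigma _{1}^{2}+5\sigma _{2}$, $\eta _{1}=2\sigma _{1}$, $\eta _{2}=\sigma _{1}^{2}+3\sigma _{2}$ reproduce exactly the paper's $\sigma _{1},\sigma _{2},\rho _{1},\rho _{2}$, so the two computations agree. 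One caveat: in part (b) your Step 1 actually yields $r^{\ast }(h_{\nu })<1/\sigma _{1}=4(\nu +1)Q(\nu )/Q(\nu +2)$ rather than the stated $2(\nu +1)Q(\nu )/Q(\nu +2)$; the paper's own proof contains the same factor-of-two slip, evaluating $\sum \lambda _{\nu ,n}^{-2}$ as $Q(\nu +2)/(2(\nu +1)Q(\nu ))$ in contradiction with (\ref{S5}), so this discrepancy is inherited from the source rather than introduced by you.
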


\begin{proof}
\textbf{a)} By using the first Rayleigh sum (\ref{S5}) and the implict
relation for $r^{\ast }(g_{\nu }),$ obtained by Kreyszing and Todd \cite{Kr}%
, we get for all $\nu >\max\{0,\nu_0 \}$ that%
\begin{equation*}
\frac{1}{\left( r^{\ast }(g_{\nu })\right) ^{2}}=\sum\limits_{n\geq 1}\frac{2%
}{ \lambda_{\nu ,n} ^{2}-\left( r^{\ast }(g_{\nu })\right) ^{2}}%
>\sum\limits_{n\geq 1}\frac{2}{\lambda_{\nu ,n}^{2}}=\frac{Q(\nu +2)}{%
2(\nu+1)Q(\nu)}.
\end{equation*}%
Now, by using the Euler-Rayleigh inequalities it is possible to have more
tight bounds for the radius of univalence (and starlikeness) $r^{\ast
}(g_{\nu})$. We define the function $\Psi _{\nu }(z)=g_{\nu }^{\prime }(z)$,
where $g_{\nu}^{\prime }$ defined by (\ref{r4}). Now, taking logarithmic
derivative of both sides of (\ref{r4}) for $\left\vert z\right\vert <\delta
_{\nu ,1}$ we have%
\begin{equation}
\frac{\Psi _{\nu }^{\prime }(z)}{\Psi _{\nu }(z)}=-\sum\limits_{n\geq 1}%
\frac{2z}{\delta _{\nu ,n} ^{2}-z^{2}}=-2\sum\limits_{n\geq
1}\sum\limits_{k\geq 0}\frac{1}{\left( \delta _{\nu ,n}\right) ^{2(k+1)}}%
z^{2k+1}=-2\sum\limits_{k\geq 0}\sigma _{k+1}z^{2k+1}  \label{p1}
\end{equation}%
where $\sigma _{k}=\sum_{n\geq 1}\left( \delta _{\nu ,n}\right) ^{-k}$ is
Euler-Rayleigh sum for the zeros of $\Psi _{\nu }.$ Also, using (\ref{r31})
from the infinite sum representation of $\Psi _{\nu}$ we obtain%
\begin{equation}
\frac{\Psi _{\nu }^{\prime }(z)}{\Psi _{\nu}(z)}=\frac{\sum\limits_{n\geq
0}U_{n}z^{2n+1}}{\sum\limits_{n\geq 0}V_{n}z^{2n}},  \label{p2}
\end{equation}%
where 
\begin{equation*}
U_{n}=\frac{2\left( -1\right) ^{n+1}\Gamma (\nu+1)Q(2n+\nu +2)(2n+3) }{%
n!4^{n+1}\Gamma (n+\nu +2)Q(\nu)}
\end{equation*}%
and%
\begin{equation*}
V_{n}=\frac{\left( -1\right) ^{n}\Gamma (\nu +1)Q(2n+\nu)(2n+1)}{
n!4^{n}\Gamma (n+\nu +1)Q(\nu)}.
\end{equation*}%
By comparing the coefficients with the same degrees of (\ref{p1}) and (\ref%
{p2}) we obtain the Euler-Rayleigh sums%
\begin{equation*}
\sigma _{1}=\frac{3Q(\nu +2)}{4(\nu+1)Q(\nu)} \text{ and } \sigma _{2}= 
\frac{9(\nu +2)Q^2(\nu +2)-5(\nu+1)Q(\nu)Q(\nu +4)}{ 16(\nu+1)^2(\nu
+2)Q^2(\nu)}
\end{equation*}
and 
\begin{equation*}
\sigma _{3}= \frac{54(\nu+2)(\nu+3)Q^3(\nu
+2)-45(\nu+1)^2(\nu+3)Q(\nu)Q(\nu+2)Q(\nu+4)+7(\nu+1)^2Q^2(\nu)Q(\nu+6)}{
128(\nu+1)^3(\nu+2)(\nu+3)Q^3(\nu)}
\end{equation*}%
By using the Euler-Rayleigh inequalities%
\begin{equation*}
\sigma _{k}^{-\frac{1}{k}}<\delta _{\nu ,n} ^{2}<\frac{\sigma _{k}}{\sigma
_{k+1}}
\end{equation*}%
for $\nu >\max\{0,\nu_0 \}$, $k\in 
%TCIMACRO{\U{2115} }%
%BeginExpansion
\mathbb{N}
%EndExpansion
$ and for $k=1$ and $k=2$ we get the following inequalities 
\begin{equation*}
\frac{4(\nu+1)Q(\nu)}{3Q(\nu +2)}<\left( r^{\ast }(g_{\nu })\right) ^{2}<%
\frac{4Q(\nu+2)}{\frac{3Q^2(\nu +2)}{(\nu+1)Q(\nu)}-\frac{5Q(\nu+4) }{%
3(\nu+2)}}
\end{equation*}
and 
\begin{equation*}
\sqrt{\frac{16(\nu+1)^2(\nu +2)Q^2(\nu)}{9(\nu +2)Q^2(\nu
+2)-5(\nu+1)Q(\nu)Q(\nu +4) }}<\left( r^{\ast }(g_{\nu })\right) ^{2}<
\end{equation*}
\begin{equation*}
< \frac{8(\nu+1)(\nu+3)Q(\nu)\left[9(\nu+2)Q^2(\nu
+2)-5(\nu+1)Q(\nu)Q(\nu+4) \right]}{ 9(\nu+3)Q(\nu+2)\left[6(\nu+2)Q^2(\nu
+2)-5(\nu+1)^2Q(\nu)Q(\nu+4) \right]+7(\nu+1)^2Q^2(\nu)Q(\nu+6)}
\end{equation*}
and it is possible to have more tighter bounds for other values of $k\in 
%TCIMACRO{\U{2115} }%
%BeginExpansion
\mathbb{N}
%EndExpansion
.$

\textbf{b)} By using the first Rayleigh sum (\ref{S5}) and the implict
relation for $r^{\ast }(h_{\nu}),$ obtained by Kreyszing and Todd \cite{Kr},
we get for all $\nu >\max\{0,\nu_0 \}$ that 
\begin{equation*}
\frac{1}{r^{\ast }(h_{\nu})}=\sum\limits_{n\geq 1}\frac{1}{ \lambda_{\nu
,n}^{2}-r^{\ast }(h_{\nu})}>\sum\limits_{n\geq 1}\frac{1}{\lambda_{\nu ,n}
^{2}}=\frac{Q(\nu +2)}{2(\nu+1)Q(\nu)}.
\end{equation*}%
Now, by using the Euler-Rayleigh inequalities it is possible to have more
tight bounds for the radius of univalence (and starlikeness) $r^{\ast
}(h_{\nu })$. We define the function $\Phi _{\nu }(z)=h_{\nu}^{\prime}(z)$,
where $h_{\nu }^{\prime }$ defined by (\ref{r32}) or (\ref{r41}). Now,
taking logarithmic derivative of both sides of (\ref{r41}) we have%
\begin{equation}
\frac{\Phi _{\nu}^{\prime }(z)}{\Phi _{\nu}(z)}=-\sum\limits_{n\geq 1}\frac{1%
}{\gamma _{\nu ,n}-z}=-\sum\limits_{n\geq 1}\sum\limits_{k\geq 0}\frac{1}{%
\left( \gamma _{\nu ,n}\right) ^{k+1}}z^{k}=-\sum\limits_{k\geq 0}\rho
_{k+1}z^{k},\text{ \ \ }\left\vert z\right\vert <\gamma _{\nu ,1}
\label{p11}
\end{equation}%
where $\rho _{k}=\sum_{n\geq 1}\left( \gamma _{\nu ,n}\right) ^{-k}$ is
Euler-Rayleigh sum for the zeros of $\Phi _{\nu }.$ Also, using (\ref{r32})
from the infinite sum representation of $\Phi _{\nu }$ we obtain%
\begin{equation}
\frac{\Phi _{\nu }^{\prime }(z)}{\Phi _{\nu }(z)}=\frac{\sum\limits_{n\geq
0}K_{n}z^{n}}{\sum\limits_{n\geq 0}L_{n}z^{n}},  \label{p12}
\end{equation}%
where%
\begin{equation*}
K_{n}=\frac{\left( -1\right) ^{n+1}\Gamma (\nu +1)Q(2n+\nu+2)(n+2)}{
n!4^{n+1}\Gamma (n+\nu +2)Q(\nu)}
\end{equation*}%
and%
\begin{equation*}
L_{n}=\frac{\left( -1\right) ^{n}\Gamma (\nu +1)Q(2n+\nu)(n+1)}{%
n!4^{n}\Gamma (n+\nu +1)Q(\nu)}.
\end{equation*}%
By comparing the coefficients with the same degrees of (\ref{p11}) and (\ref%
{p12}) we obtain the Euler-Rayleigh sums 
\begin{equation*}
\rho _{1}=\frac{Q(\nu +2)}{2(\nu+1)Q(\nu)} \text{ and } \rho _{2}=\frac{1}{%
4(\nu+1)Q(\nu )}\left( \frac{Q^2(\nu +2)}{(\nu+1)Q(\nu)}-\frac{3Q(\nu +4)}{%
4(\nu +2)}\right)
\end{equation*}%
and 
\begin{equation*}
\rho _{3}=\frac{1}{64(\nu+1)^3Q^3(\nu )}\left(8Q^3(\nu+2)- \frac{%
9(\nu+1)Q(\nu)Q(\nu +2)Q(\nu +4)}{(\nu+2)}+\frac{2(\nu+1)^2Q^2(\nu)Q(\nu +6)%
}{(\nu +2)(\nu +3)} \right)
\end{equation*}%
By using the Euler-Rayleigh inequalities 
\begin{equation*}
\rho _{k}^{-\frac{1}{k}}<\gamma _{\nu ,n}<\frac{\rho _{k}}{\rho _{k+1}}
\end{equation*}%
for $\nu >\max\{0,\nu_0 \}$, $k\in 
%TCIMACRO{\U{2115} }%
%BeginExpansion
\mathbb{N}
%EndExpansion
$ and for $k=1$ and $k=2,$ we get the following inequalities 
\begin{equation*}
\frac{2(\nu+1)Q(\nu)}{3Q(\nu +2)}< r^{\ast }(h_{\nu })<\frac{2Q(\nu+2)}{%
\frac{Q^2(\nu +2)}{(\nu+1)Q(\nu)}-\frac{3Q(\nu+4) }{4(\nu+2)}}
\end{equation*}%
and 
\begin{equation*}
\sqrt{ \frac{4(\nu+1)Q(\nu )}{ \frac{Q^2(\nu +2)}{(\nu+1)Q(\nu)}-\frac{%
3Q(\nu +4)}{4(\nu +2)}} }< r^{\ast }(h_{\nu })<
\end{equation*}
\begin{equation*}
<\frac{4(\nu+3)Q(\nu)\left[-4(\nu+2)Q^2(\nu+2)+3(\nu+1)^2Q(\nu) Q(\nu+4) %
\right] }{(\nu+3)Q(\nu+2)\left[8(\nu+2)Q^2(\nu+2)-9(\nu+1)Q(\nu)Q(\nu+4) %
\right]+2(\nu+1)^2Q^2(\nu)Q(\nu+6)}
\end{equation*}
and it is possible to have more tighter bounds for other values of $k\in 
%TCIMACRO{\U{2115} }%
%BeginExpansion
\mathbb{N}
%EndExpansion
.$
\end{proof}

The next result concerning bounds for radii of convexity of functions $%
g_{\nu }$ and $h_{\nu }$.

\begin{theorem}
\label{t5} The following statements hold:

\begin{description}
\item[a)] If $\nu\geq \max\{0,\nu_0\}$ then $r^{c}(g_{\nu })$ satisfies the
inequalities 
\begin{equation*}
\frac{2}{3}\sqrt{\frac{(\nu +1)Q(\nu)}{Q(\nu +2)}}<r^{c}(g_{\nu })< \sqrt{ 
\frac{36(\nu+1)(\nu+2)Q(\nu+2)}{81(\nu+2)Q^2(\nu+2)-25(\nu+1)Q(\nu)Q(\nu+4)} 
}
\end{equation*}
and 
\begin{equation*}
\sqrt[4]{\frac{16(\nu+1)^2(\nu+2)Q(\nu)}{81(\nu+2)Q^2(\nu+2)-25(\nu+1)Q(%
\nu)Q(\nu+4)}}< r^{c}(g_{\nu})
\end{equation*}
\begin{equation*}
<\sqrt{\frac{8(\nu+1)(\nu+3)Q(\nu)\left[81(\nu+2)Q^2(\nu+2)-25(\nu+1)Q(%
\nu)Q(\nu+4)\right]}{27(\nu+3)Q(\nu+2)\left[54(\nu+2)Q^2(\nu+2)-25(%
\nu+1)^2Q(\nu)Q(\nu+4)\right]+49(\nu+1)^2Q^2(\nu)Q(\nu+6) }}.
\end{equation*}

\item[b)] If $\nu\geq \max\{0,\nu_0\}$ then $r^{c}(h_{\nu })$ satisfies the
inequalities 
\begin{equation*}
\frac{(\nu +1)Q(\nu)}{Q(\nu +2)}<r^{c}(h_{\nu })<\frac{16(\nu+2)Q(\nu)Q(%
\nu+2)}{16(\nu+2)Q^2(\nu+2)-9Q(\nu)Q(\nu+4)}
\end{equation*}
and 
\begin{equation*}
\sqrt{\frac{16(\nu+1)(\nu+2)Q^2(\nu)}{16(\nu+2)Q^2(\nu+2)-9Q(\nu)Q(\nu+4)}}%
<r^{c}(h_{\nu })
\end{equation*}
\begin{equation*}
<\frac{2(\nu+1)^2(\nu+3)Q(\nu)\left[16(\nu+2)Q^2(\nu+2)-9Q(\nu) Q(\nu+4) %
\right] }{(\nu+3)Q(\nu+2)\left[32(\nu+2)Q^2(\nu+2)-27(\nu+1)^2Q(\nu)Q(\nu+4) %
\right]+4(\nu+1)^2Q^2(\nu)Q(\nu+6)}.
\end{equation*}
\end{description}
\end{theorem}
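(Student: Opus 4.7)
My plan mirrors the structure of Theorem \ref{t4} but applies the Euler--Rayleigh machinery to the auxiliary functions $\Xi_\nu(z)=(zg_\nu'(z))'=g_\nu'(z)+zg_\nu''(z)$ and $\Theta_\nu(z)=(zh_\nu'(z))'=h_\nu'(z)+zh_\nu''(z)$. The starting observation for part \textbf{a} is that the defining equation $1+rg_\nu''(r)/g_\nu'(r)=0$ characterizing $r^{c}(g_\nu)$ is equivalent to $\Xi_\nu(r)=0$, so $r^{c}(g_\nu)$ coincides with the smallest positive zero $\alpha_{\nu,1}$ of $\Xi_\nu$. I would then invoke the Mittag--Leffler/Hadamard argument already used in the proof of Theorem \ref{t3}b: use the Maclaurin series of $\Xi_\nu$ obtained from (\ref{r31}), verify via the asymptotic $\log\Gamma(n+c)\sim n\log n$ that $\Xi_\nu$ is entire of growth order $1/2$, and conclude the Weierstrassian factorization
\begin{equation*}
\Xi_\nu(z)=\prod_{n\geq 1}\left(1-\frac{z^{2}}{\alpha_{\nu,n}^{2}}\right),
\end{equation*}
with $0<\alpha_{\nu,1}<\alpha_{\nu,2}<\cdots$.

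The next step is to extract the Euler--Rayleigh sums $\tau_k=\sum_{n\geq 1}\alpha_{\nu,n}^{-2k}$ for $k=1,2,3$. Exactly as in the proof of Theorem \ref{t4}a, I would compare the two expressions
\begin{equation*}
\frac{\Xi_\nu'(z)}{\Xi_\nu(z)}=-2\sum_{k\geq 0}\tau_{k+1}z^{2k+1}\qquad\text{and}\qquad \frac{\Xi_\nu'(z)}{\Xi_\nu(z)}=\frac{\sum_{n\geq 0}\widetilde U_n z^{2n+1}}{\sum_{n\geq 0}\widetilde V_n z^{2n}},
\end{equation*}
where $\widetilde U_n,\widetilde V_n$ are the explicit Maclaurin coefficients of $\Xi_\nu'$ and $\Xi_\nu$ read off after differentiating (\ref{r31}). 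Matching coefficients of $z$, $z^{3}$ and $z^{5}$ produces $\tau_1,\tau_2,\tau_3$ in closed form in terms of $Q(\nu),Q(\nu+2),Q(\nu+4),Q(\nu+6)$. Feeding these into the classical Euler--Rayleigh inequalities $\tau_k^{-1/k}<\alpha_{\nu,1}^{2}<\tau_k/\tau_{k+1}$ with $k=1$ and $k=2$ then delivers the three stated double inequalities for $r^{c}(g_\nu)$.

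Part \textbf{b} is entirely parallel, the only twist being that the product representation of $h_\nu'$ in (\ref{r41}) is written in powers of $z$ rather than $z^{2}$. Consequently the Hadamard factorization reads
\begin{equation*}
\Theta_\nu(z)=\prod_{n\geq 1}\left(1-\frac{z}{\beta_{\nu,n}}\right),
\end{equation*}
and the relevant Euler--Rayleigh sums $\rho_k'=\sum_{n\geq 1}\beta_{\nu,n}^{-k}$ are obtained by matching coefficients in $\Theta_\nu'(z)/\Theta_\nu(z)=-\sum_{k\geq 0}\rho_{k+1}'z^{k}$ against the rational series formed from $\Theta_\nu$ via (\ref{r32}). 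Plugging $\rho_1',\rho_2',\rho_3'$ into $(\rho_k')^{-1/k}<\beta_{\nu,1}<\rho_k'/\rho_{k+1}'$ for $k=1,2$ yields the two stated double inequalities for $r^{c}(h_\nu)=\beta_{\nu,1}$.

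The main obstacle I anticipate is not conceptual but algebraic bookkeeping: computing $\tau_3$ and $\rho_3'$ in closed form requires inverting formal power series up to fifth order and simplifying long rational expressions in the $Q(\nu+2j)$. Pairing the terms so that the cumbersome intermediate forms collapse to the compact expressions appearing in the statement is where the pen-and-paper effort will concentrate; no new analytic ideas beyond those already present in Theorem \ref{t4} are required.
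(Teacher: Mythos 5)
Your proposal follows essentially the same route as the paper: identify $r^{c}(g_{\nu})$ and $r^{c}(h_{\nu})$ with the smallest positive zeros of $(zg_{\nu}'(z))'$ and $(zh_{\nu}'(z))'$ (the paper phrases this identification via the Alexander duality theorem), factor these entire functions of growth order $1/2$ as Weierstrass products, and extract the first three Euler--Rayleigh sums by matching coefficients in the logarithmic derivative before applying $\tau_k^{-1/k}<\alpha_{\nu,1}^{2}<\tau_k/\tau_{k+1}$ for $k=1,2$. The only point you leave implicit is the reality and positivity of all zeros of these auxiliary functions, which the paper supplies in one line by noting that the Laguerre--P\'{o}lya class is closed under differentiation; otherwise the two arguments coincide step for step.
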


\begin{proof}
\textbf{a)} By using the Alexander duality theorem for starlike and convex
functions we can say that the function $g_{\nu }(z)$ is convex if and only
if $zg_{\nu }^{\prime }(z)$ is starlike. But, the smallest positive zero of $%
z\mapsto z\left( zg_{\nu }^{\prime }(z)\right) ^{\prime }$ is actually the
radius of starlikeness of $z\mapsto \left( zg_{\nu }^{\prime }(z)\right) $,
according to Theorem \ref{t2} and Theorem \ref{t3}. Therefore, the radius of
convexity $r^{c}(g_{\nu})$ is the smallest positive root of the equation $%
\left( zg_{\nu }^{\prime }(z)\right) ^{\prime }=0$. Therefore from (\ref{r31}%
), we have%
\begin{equation*}
\Delta _{\nu }(z)=\left( zg_{\nu }^{\prime }(z)\right) ^{\prime
}=\sum_{n=0}^{\infty }\frac{\left( -1\right) ^{n}(2n+1)^2Q(2n+\nu)\Gamma
(\nu +1)}{n!\Gamma (n+\nu +1)Q(\nu)}\left( \frac{z}{2}\right) ^{2n}
\end{equation*}
Since the function $g_{\nu }(z)$ belongs to the Laguerre-P\'{o}lya class of
entire functions and $\mathcal{LP}$ is closed under differentiation, we can
say that the function $\Delta _{\nu }(z)\in \mathcal{LP}$. Therefore, the
zeros of the function $\Delta _{\nu}$ are all real. Suppose that $d_{\nu ,n}$
are the zeros of the function $\Delta _{\nu }$. Then the function $\Delta
_{\nu }$ has the infinite product representation as follows: 
\begin{equation}
\Delta _{\nu }(z)=\prod\limits_{n\geq 1}\left( 1-\frac{z^{2}}{ d_{\nu
,n}^{2} }\right) .  \label{p13}
\end{equation}%
By taking the logarithmic derivative of (\ref{p13}) we get%
\begin{equation}
\frac{\Delta _{\nu}^{\prime }(z)}{\Delta _{\nu }(z)}=-2\sum\limits_{n\geq 1}%
\frac{z}{d_{\nu ,n}^{2}-z^{2}} =-2\sum\limits_{n\geq 1}\sum\limits_{k\geq 0}%
\frac{1}{\left( d_{\nu ,n}\right) ^{2(k+1)}}z^{2k+1}=-2\sum\limits_{k\geq
0}\kappa _{k+1}z^{2k+1},~\left\vert z\right\vert <d_{\nu ,1}  \label{p14}
\end{equation}
where $\kappa _{k}=\sum_{n\geq 1}\left( d_{\nu ,n}\right) ^{-k}$ is
Euler-Rayleigh sum for the zeros of $\Delta _{\nu }.$ On the other hand, by
considering infinite sum representation of $\Delta _{\nu }(z)$ we obtain 
\begin{equation}
\frac{\Delta _{\nu }^{\prime }(z)}{\Delta _{\nu }(z)}=\frac{%
\sum\limits_{n\geq 0}X_{n}z^{2n+1}}{\sum\limits_{n\geq 0}Y_{n}z^{2n}},
\label{p15}
\end{equation}%
where 
\begin{equation*}
X_{n}=\frac{2\left( -1\right) ^{n+1}\Gamma (\nu+1)Q(2n+\nu +2)(2n+3)^2 }{%
n!4^{n+1}\Gamma (n+\nu +2)Q(\nu)}
\end{equation*}%
and%
\begin{equation*}
Y_{n}=\frac{\left( -1\right) ^{n}\Gamma (\nu +1)Q(2n+\nu)(2n+1)^2}{
n!4^{n}\Gamma (n+\nu +1)Q(\nu)}.
\end{equation*}%
By comparing the coefficients of (\ref{p14}) and (\ref{p15}) we obtain%
\begin{equation*}
\kappa _{1}=\frac{9Q(\nu +2)}{4(\nu +1)Q(\nu)} \text{ and } \kappa _{2}=%
\frac{81(\nu+2)Q^2(\nu+2)-25(\nu+1)Q(\nu)Q(\nu+4)}{16(\nu+1)^2(\nu+2)Q(\nu)}
\end{equation*}%
and%
\begin{equation*}
\kappa _{3}=\frac{27(\nu+3)Q(\nu+2)\left[54(\nu+2)Q^2(\nu+2)-25(\nu+1)^2Q(%
\nu)Q(\nu+4)\right]+49(\nu+1)^2Q^2(\nu)Q(\nu+6)}{128(\nu+1)^3(\nu+2)(%
\nu+3)Q^3(\nu)}
\end{equation*}%
By using the Euler-Rayleigh inequalities 
\begin{equation*}
\kappa _{k}^{-\frac{1}{k}}<d_{\nu ,n}^2 <\frac{\kappa _{k}}{\kappa _{k+1}}
\end{equation*}
for $\nu \geq\max\{0,\nu_0 \}$, $k\in \mathbb{N} $ and for $k=1$ and $h=2,$
we get the following inequalities 
\begin{equation*}
\frac{4(\nu +1)Q(\nu)}{9Q(\nu +2)}<\left( r^{c}(g_{\nu })\right) ^{2}< \frac{%
36(\nu+1)(\nu+2)Q(\nu)Q(\nu+2)}{81(\nu+2)Q^2(\nu+2)-25(\nu+1)Q(\nu)Q(\nu+4)}
\end{equation*}
and 
\begin{equation*}
\frac{4}{9}\sqrt{\frac{(\nu+1)^2(\nu+2)Q(\nu)}{(\nu+2)Q^2(\nu+2)-25(\nu+1)Q(%
\nu)Q(\nu+4)}}<\left( r^{c}(g_{\nu})\right) ^{2}
\end{equation*}
\begin{equation*}
<\frac{8(\nu+1)(\nu+3)Q(\nu)\left[81(\nu+2)Q^2(\nu+2)-25(\nu+1)Q(\nu)Q(\nu+4)%
\right]}{27(\nu+3)Q(\nu+2)\left[54(\nu+2)Q^2(\nu+2)-25(\nu+1)^2Q(\nu)Q(\nu+4)%
\right]+49(\nu+1)^2Q^2(\nu)Q(\nu+6) }
\end{equation*}
\textbf{b)} By using the same procedure as in the previous proof we can say
that the radius of convexity $r^{c}(h_{\nu})$ is the smallest positive root
of the equation $\left( zh_{\nu }^{\prime }(z)\right) ^{\prime }=0$
according to Theorem \ref{t3}. From (\ref{r32}), we have%
\begin{equation}
\Theta _{\nu}(z)=\left( zh_{\nu}^{\prime }(z)\right) ^{\prime
}=\sum_{n=0}^{\infty }\frac{\left( -1\right) ^{n}(n+1)^2\Gamma (\nu
+1)Q\left( 2n+\nu \right)}{n!\Gamma (n+\nu +1)Q\left(\nu \right)}\left( 
\frac{z }{4}\right) ^{n}.  \label{p151}
\end{equation}%
Moreover, we know $h_{\nu}(z)$ belongs to the Laguerre-P\'{o}lya class of
entire functions and $\mathcal{LP}$, consequently $\Theta _{\nu }(z)\in 
\mathcal{LP}$. On the other words, the zeros of the function $\Theta _{\nu}$
are all real. Assume that $l_{\nu ,n}$ are the zeros of the function $\Theta
_{\nu}$. In this case, the function $\Theta _{\nu}$ has the infinite product
representation as follows: 
\begin{equation}
\Theta _{\nu}(z)=\prod\limits_{n\geq 1}\left( 1-\frac{z^{2}}{\ l_{\nu ,n}^2}%
\right) .  \label{p16}
\end{equation}%
By taking the logarithmic derivative of both sides of (\ref{p16}) for \ $%
\left\vert z\right\vert <l_{\nu ,1}$ we have%
\begin{equation}
\frac{\Theta _{\nu}^{\prime }(z)}{\Theta _{\nu}(z)}=-\sum\limits_{n \geq 1}%
\frac{1}{l_{\nu ,n}-z}=-\sum\limits_{n\geq 1}\sum\limits_{k\geq 0}\frac{1}{%
\left( l_{\nu ,n}\right) ^{k+1}}z^{k}=-\sum\limits_{k\geq 0}\omega
_{k+1}z^{k}  \label{p17}
\end{equation}%
where $\omega _{k}=\sum_{n\geq 1}\left( l_{\nu ,n}\right) ^{-k}$. In
addition, by using the derivative of infinite sum representation considering
infinite sum representation of (\ref{p151}) we obtain%
\begin{equation}
\frac{\Theta _{\nu }^{\prime }(z)}{\Theta _{\nu}(z)}=\sum\limits_{n \geq
0}T_{n}z^{n}\diagup \sum\limits_{n\geq 0}S_{n}z^{n},  \label{p18}
\end{equation}
where%
\begin{equation*}
T_{n}=\frac{\left( -1\right) ^{n+1}(n+2)^2\Gamma (\nu +1)Q\left( 2n+\nu+2
\right)}{n!4^{n+1}\Gamma (n+\nu +2)Q\left(\nu \right)}
\end{equation*}%
and%
\begin{equation*}
S_{n}=\frac{\left( -1\right) ^{n}(n+1)^2\Gamma (\nu +1)Q\left( 2n+\nu \right)%
}{n!4^n\Gamma (n+\nu +1)Q\left(\nu \right)}.
\end{equation*}%
By comparing the coefficients of (\ref{p17}) and (\ref{p18}) we get 
\begin{equation*}
\omega _{1}=\frac{Q(\nu +2)}{(\nu +1)Q(\nu)} \text{ and } \omega _{2}=\frac{%
16(\nu+2)Q^2(\nu+2)-9Q(\nu)Q(\nu+4)}{16(\nu+1)(\nu+2)Q^2(\nu)}
\end{equation*}%
and%
\begin{equation*}
\omega _{3}=\frac{1}{32(\nu+1)^3Q^3(\nu )}\left(32Q^3(\nu+2)- \frac{%
27(\nu+1)^2Q(\nu)Q(\nu +2)Q(\nu +4)}{(\nu+2)}+\frac{4(\nu+1)^2Q^2(\nu)Q(\nu
+6)}{(\nu +2)(\nu +3)} \right)
\end{equation*}
By using the Euler-Rayleigh inequalities%
\begin{equation*}
\omega _{k}^{-\frac{1}{k}}<l_{\nu ,n}<\frac{\omega _{k}}{\omega _{k+1}}
\end{equation*}%
for $\nu \geq\max\{0,\nu_0 \}$, $k\in \mathbb{N}$ and for $k=1$ and $k=2,$
we get the following inequality 
\begin{equation*}
\frac{(\nu +1)Q(\nu)}{Q(\nu +2)}<r^{c}(h_{\nu })<\frac{16(\nu+2)Q(\nu)Q(%
\nu+2)}{16(\nu+2)Q^2(\nu+2)-9Q(\nu)Q(\nu+4)}
\end{equation*}
and 
\begin{equation*}
\sqrt{\frac{16(\nu+1)(\nu+2)Q^2(\nu)}{16(\nu+2)Q^2(\nu+2)-9Q(\nu)Q(\nu+4)}}%
<r^{c}(h_{\nu })
\end{equation*}
\begin{equation*}
<\frac{2(\nu+1)^2(\nu+3)Q(\nu)\left[16(\nu+2)Q^2(\nu+2)-9Q(\nu) Q(\nu+4) %
\right] }{(\nu+3)Q(\nu+2)\left[32(\nu+2)Q^2(\nu+2)-27(\nu+1)^2Q(\nu)Q(\nu+4) %
\right]+4(\nu+1)^2Q^2(\nu)Q(\nu+6)}
\end{equation*}
and it is possible to have more tighter bounds for other values of $k\in 
\mathbb{N} .$
\end{proof}

\subsection{Appendix}
We know  the $\Gamma(z)$ is defined by
\begin{equation*}
\frac{1}{\Gamma(z)}=z\prod_{j=1}^{\infty}\left(1+\frac{z}{j} \right)\left(1+%
\frac{1}{j} \right)^{-z},\text{ \ \ \ \ \ } z\in \mathbb{C}.
\end{equation*}
From \ref{J2} and \ref{r11}, we have 
\begin{equation*}
\frac{2^{\nu }\Gamma (\nu +1)N_\nu(z^{1/2})}{Q(\nu )z^{{\nu/2} }}%
=\prod\limits_{n\geq 1}\left( 1-\frac{z}{ \lambda_{\nu ,n} ^{2}}%
\right)=\sum_{n=0}^{\infty }\frac{\left( -1\right)
^{n}\Gamma\left(\nu+1\right)Q\left(2n+\nu\right)z^n}{n!4^n\Gamma (n+\nu
+1)Q\left(\nu\right)}.
\end{equation*}
Thus, in \cite{Zhang2010} using the Lemma 1, we obtain 
\begin{equation*}
s_n=\sum_{k=1}^{\infty}\frac{1}{\lambda_{\nu ,k}^{2n}} 
\end{equation*}
and 
\begin{equation*}
\sum_{1\leq k_1<k_2<\cdots<k_n} \frac{1}{\lambda_{\nu
,k_1}^2\cdot\lambda_{\nu ,k_2}^2\cdots\lambda_{\nu ,k_n}^2}=\frac{%
\Gamma\left(\nu+1\right)Q\left(2n+\nu\right)}{ n!4^n\Gamma (n+\nu
+1)Q\left(\nu\right)} 
\end{equation*}
or 
\begin{equation*}
\sum_{1\leq k_1<k_2<\cdots<k_n} \frac{1}{\lambda_{\nu
,k_1}^2\cdot\lambda_{\nu ,k_2}^2\cdots\lambda_{\nu ,k_n}^2}=\frac{%
Q\left(2n+\nu\right)}{ n!4^n(\nu +1)_nQ\left(\nu\right)} .
\end{equation*}
\newline
Then, again apply in the Lemma 1 in \cite{Zhang2010}, with $c=-\frac{1}{4}$ to get 
\newline
$\sum_{k=1}^{\infty}\frac{4^n\left(-1\right)^n}{\lambda_{\nu ,k}^{2n}}$%
\newline
$=\text{det}$ $\left( 
\begin{array}{cccccc}
1 & 0 & 0 & \cdots & 0 & \frac{-Q\left( \nu +2\right) }{\left( \nu +1\right)
_{1}Q\left( \nu \right)} \\ 
\frac{Q\left( \nu +2\right) }{ \left( \nu +1\right) _{1}Q\left( \nu \right)}
& 1 & 0 & \cdots & 0 & \frac{-Q\left( \nu +4\right) }{ \left( \nu +1\right)
_{2}Q\left( \nu \right)} \\ 
\frac{Q\left( \nu +4\right) }{2! \left( \nu +1\right) _{2}Q\left( \nu
\right) } & \frac{Q\left( \nu +2\right) }{ \left( \nu +1\right) _{1}Q\left(
\nu \right)} & 1 & \cdots & 0 & \frac{-Q\left( \nu +6\right) }{2! \left( \nu
+1\right) _{3}Q\left( \nu \right)} \\ 
\vdots & \vdots & \vdots & \ddots & \vdots & \vdots \\ 
\frac{Q\left( \nu +2\left( n-2\right) \right) }{\left( n-2\right) ! \left(
\nu +1\right) _{n-2}Q\left( \nu \right)} & \frac{Q\left( \nu +2\left(
n-3\right) \right) }{\left( n-3\right) ! \left( \nu +1\right) _{n-3}Q\left(
\nu \right)} & \frac{Q\left( \nu +2\left( n-4\right) \right) }{\left(
n-4\right) ! \left( \nu +1\right) _{n-4}Q\left( \nu \right)} & \cdots & 1 & 
\frac{-Q\left( \nu +2\left( n-1\right) \right) }{\left( n-2\right) ! \left(
\nu +1\right) _{n-1}Q\left( \nu \right)} \\ 
\frac{Q\left( \nu +2\left( n-1\right) \right) }{\left( n-1\right) ! \left(
\nu +1\right) _{n-1}Q\left( \nu \right)} & \frac{Q\left( \nu +2\left(
n-2\right) \right) }{\left( n-2\right) ! \left( \nu +1\right) _{n-2}Q\left(
\nu \right)} & \frac{Q\left( \nu +2\left( n-3\right) \right) }{\left(
n-3\right) ! \left( \nu +1\right) _{n-3}Q\left( \nu \right)} & \cdots & 
\frac{Q\left( \nu +2\right) }{ \left( \nu +1\right) _{1}Q\left( \nu \right)}
& \frac{-Q\left( \nu +2n\right) }{\left( n-1\right) ! \left( \nu +1\right)
_{n}Q\left( \nu \right)}
\end{array}%
\right) .$ Here are the first few $s_n,$ 
\begin{eqnarray*}
s_1 &=& \frac{Q\left(\nu+2\right) }{4\left(\nu+1\right)_1Q\left(\nu\right) }
\\
s_2&=& \frac{\left(\nu+2\right)Q^2\left(\nu+2\right)-\left(\nu+1\right)Q%
\left(\nu\right)Q\left(\nu+4\right)}{4^2Q^2\left(\nu\right)\prod_{j=1}^{2}%
\left(\nu+1\right)_j} \\
s_3&=& \frac{\left[\begin{array}{l}\left(\nu+2\right)_2Q\left(\nu+2\right)\left[
2\left(\nu+2\right)Q^2\left(\nu+2\right)-3\left(\nu+1\right)Q\left(\nu%
\right)Q\left(\nu+4\right)\right]\\+\left(\nu+1\right)^2\left(\nu+2\right)Q^2%
\left(\nu\right)Q\left(\nu+6\right)\end{array}\right] }{2!4^3Q^3\left(\nu\right)\prod_{j=1}^{3}%
\left(\nu+1\right)_j} \\
s_4&=& \frac{ \left[
\begin{array}{ll}
\left(\nu+2\right)_2\lbrack
6\left(\nu+2\right)\left(\nu+2\right)_3Q^4\left(\nu+2\right)-12\left(\nu+1%
\right)_4Q\left(\nu\right)Q^2\left(\nu+2\right)Q\left(\nu+4\right) &  \\ 
+\left(\nu+1\right)\left(\nu+1\right)_2\left(\nu+4\right)Q^2\left(\nu%
\right)Q\left(\nu+2\right)\left(Q\left(\nu+4\right)
+3Q\left(\nu+6\right)\right) &  \\ 
+\left(\nu+1\right)^2Q^2\left(\nu\right)\left(3\left(\nu+2\right)_2Q^2\left(%
\nu+4\right)-\left(\nu+1\right)_2Q\left(\nu\right)Q\left(\nu+8\right)\right)
\rbrack & 
\end{array}\right]
}{3!4^4Q^4\left(\nu\right)\prod_{j=1}^{4}\left(\nu+1\right)_j}
\end{eqnarray*}


\begin{thebibliography}{99}
\bibitem{Baricz} \'{A}. Baricz, P. A. Kup\'{a}n, R. Sz\'{a}sz, The radius of
starlikeness of normalized Bessel functions of the first kind. Proc. Amer.
Math. Soc. 142(5) (2014), 2019-2025.

\bibitem{Ba1} \'{A}. Baricz, R. Sz\'{a}sz, The radius of convexity of
normalized Bessel functions of the first kind. Anal. Appl. 12(5) (2014),
485-509.

\bibitem{Ba3} \'{A}. Baricz, R. Sz\'{a}sz, The radius of convexity of
normalized Bessel functions. Anal. Math. 41(3) (2015), 141-151.

\bibitem{Erhandeniz} \'{A}. Baricz, M. \c Ca\u glar, E. Deniz, Starlikeness of bessel functions and their derivatives. Math. Inequal. Appl. 19(2) (2016), 439-449.

\bibitem{Ba21} \'{A}. Baricz, H. Orhan, R. Sz\'{a}sz, The radius of $\alpha
- $convexity of normalized Bessel functions of the first kind. Comput.
Method. Func. Theo. 16(1) (2016), 93-103.

\bibitem{Br} R.K. Brown, Univalence of Bessel functions, Proc. Amer. Math.
Soc. 11(2) (1960) 278--283.

\bibitem{Ca} M. \c{C}a\u{g}lar, E. Deniz, R. Sz\'{a}sz, Radii of $\alpha -$%
convexity of some normalized Bessel functions of the first kind. Results
Math. 72 (2017), 2023-2035.

\bibitem{De} E. Deniz, R. Sz\'{a}sz, The radius of uniform convexity of
Bessel functions. J. Math. Anal. Appl. 453(1) (2017) 572--588.

\bibitem{Deniztrev} E. Deniz, S. Kaz\i mo\u glu, M. \c Ca\u glar, Radii of Starlikeness and Convexity of Bessel Function Derivatives. Ukr. math. J. (In Press).


\bibitem{IS} M.E.H. Ismail, M.E. Muldoon, Bounds for the small real and
purely imaginary zeros of Bessel and related functions, Meth. Appl. Anal.
2(1) (1995) 1--21.

\bibitem{Je} J. L.W.V. Jensen, Recherches sur la th\'{e}orie des \'{e}%
quations. Acta Math. 36 (1913) 181--195.

\bibitem{Kr} E. Kreyszig, J. Todd, The radius of univalence of Bessel
functions, Illinois J. Math. 4 (1960) 143--149.

\bibitem{Le} B. Ya. Levin, Lectures on Entire Functions. Amer. Math. Soc.:
Transl. Math. Monographs 150 (1996).

\bibitem{Mercer} A. McD. Mercer,  The zeros of $az^2J^{\prime\prime}_\nu(z)+ bzJ^\prime_\nu(z)+ cJ_\nu(z)$ as functions of order, Internat. J. Math. Math. Sci 15 (1992) 319--322.

\bibitem{Olver} F. W. J. Olver, D.W. Lozier, R.F. Boisvert, C.W. Clark
(Eds.), NIST Handbook of Mathematical Functions, Cambridge Univ. Press,
Cambridge, 2010.

\bibitem{Shah} S. M. Shah, S. Y. Trimble,  Entire functions with univalent derivatives, J. Math. Anal. Appl. 33 (1971) 220--229.


\bibitem{Sza} R. Sz\'{a}sz,\emph{\ }About the radius of starlikeness of
Bessel functions of the first kind, Monatshefte f\"{u}r Mathematik, 176
(2015), 323-330.

\bibitem{Zhang2010} R. Zhang, Sums of zeros for certain special functions,
Integral Transforms Spec. Funct., 21(5) (2010) 3511--365.\newline
\end{thebibliography}
\end{document}